\newcommand{\erre}{\mathbb{R}}
\newcommand{\enne}{\mathbb{N}}
\newcommand{\C}{\mathbb{C}}
\newcommand{\D}{\mathbb{D}}
\newcommand{\E}{\mathbb{E}}
\newcommand{\LL}{\mathbb{L}}
\renewcommand{\P}{\mathbb{P}}
\newcommand{\ip}[2]{\langle #1,#2 \rangle}
\newcommand{\bip}[2]{\left\langle #1,#2 \right\rangle}
\newcommand{\Cpol}{C_{\mathrm{pol}}}
\newcommand{\co}{\mathcal{O}}
\newtheorem{prop}{Proposition}[section]
\newtheorem{thm}[prop]{Theorem}
\newtheorem{lemma}[prop]{Lemma}
\newtheorem{defi}[prop]{Definition}
\theoremstyle{definition}
\newtheorem{hyp}[prop]{Hypothesis}
\theoremstyle{remark}
\newtheorem{rmk}[prop]{Remark}
\newtheorem{example}[prop]{Example}
\numberwithin{equation}{section}
\begin{document}

\title[Densities for parabolic semilinear SPDEs]{Existence and
regularity of the density for the solution to
semilinear dissipative parabolic SPDEs}

\author{Carlo Marinelli}
\address[Carlo Marinelli]{Institut f\"ur Angewandte Mathematik,
  Universit\"at Bonn, Germany, and Facolt\`a di Economia, Libera
  Universit\`a di Bolzano, Italy}
\urladdr{http://www.uni-bonn.de/$\sim$cm788}

\author{Eulalia Nualart}
\address[Eulalia Nualart]{Institut Galil\'ee, Universit\'e Paris 13,
  93430 Villetaneuse, France}
\email{eulalia@nualart.es}
\urladdr{http://nualart.es}

\author{Llu\'is Quer-Sardanyons}
\address[Llu\'is Quer-Sardanyons]{Departament de Matem\`atiques,
  Universitat Aut\`onoma de Barcelona, 08193 Bellaterra (Barcelona),
  Spain. Tel. 0034935814542, Fax 0034935812790.}
\email{quer@mat.uab.cat}
\urladdr{http://mat.uab.cat/~quer}

\begin{abstract}
  We prove existence and smoothness of the density of the solution to
  a nonlinear stochastic heat equation on $L^2(\mathcal{O})$ (evaluated at
  fixed points in time and space), where $\mathcal{O}$ is an open bounded domain in
  $\mathbb{R}^d$ with smooth boundary. The equation is driven by an
  additive Wiener noise and the nonlinear drift term is the
  superposition operator associated to a real function which is
  assumed to be (maximal) monotone, continuously differentiable, and
  growing not faster than a polynomial. The proof uses tools of the
  Malliavin calculus combined with methods coming from the theory of
  maximal monotone operators.
\end{abstract}

\subjclass[2000]{60H07, 60H15}

\keywords{Stochastic partial differential equation, existence and
  regularity of densities, Malliavin calculus.}

\date{21 February 2012}

\maketitle


\section{Introduction}
Let $\co \subset \erre^d$ be an open bounded domain, and consider the
stochastic semilinear parabolic evolution equation on $L^2(\co)$
\begin{equation}     \label{eq:1}
du(t) - \Delta u(t)\,dt + f(u(t))\,dt =\eta u(t)dt+ B\,dW(t), \qquad u(0)=u_0,
\end{equation}
where $\Delta$ is the Dirichlet Laplacian on $L^2(\co)$, $W$ is a
standard cylindrical Wiener process on $L^2(\co)$, $f:\erre\to\erre$
is continuous, increasing and of polynomial growth, $\eta$ is a
positive number, $B$ is a deterministic bounded linear operator on
$L^2(\co)$, and $u_0$ is an $L^2(\co)$-valued random variable (precise
assumptions are given in the next section). As explained in Remark
\ref{rmk:lip}, the above equation covers the case where $f-\eta$ is
replaced by $f+g$, where $f$ is as above and $g$ is a globally
Lipschitz function.

Under regularity assumptions on the initial datum and the
coefficients, one can prove, using monotonicity methods, that
\eqref{eq:1} admits a unique mild solution with continuous paths (see
e.g. \cite{cerrai-libro} and references therein). The purpose of this
paper is to study the existence and regularity of the density (with
respect to Lebesgue measure) of the random variable $u(t,x)$, with
$(t,x) \in ]0,T] \times \co$. In particular, we show that a sufficient
condition for the existence of the density is that $f$ is of class
$C^1$ with polynomially bounded derivative. We do not claim that this
condition is sharp (as a matter of fact, we conjecture that it is not,
by far), but we do hope that our methods could be the starting point
for further developments. The proof relies on techniques of the
Malliavin calculus and on a priori estimates for solutions of
approximating equations, obtained replacing the nonlinear drift term
by its Yosida regularization. Furthermore, we show that if $f$ is of
class $C^m$ with polynomially bounded derivatives, the density becomes
smoother, as it is natural to expect. The proof of this still relies
on a priori estimates, and requires a further regularization of the
drift, as the Yosida approximation does not necessarily have bounded
derivatives.

There is a large literature on problems of existence and regularity of
densities for solutions to parabolic SPDEs with Lipschitz
non-linearities by means of the Malliavin calculus (see
\cite{Marquez-Sarra,Millet-Sanz-AP1999,nualart-LNM,Nualart-QuerPOTA,Sanz-book}
and references therein). On the other hand, much less attention has
been dedicated to SPDEs with non-Lipschitz coefficients: the first
article (relying on Malliavin calculus) we are aware of is the article
\cite{pardoux-zhang}, where a nonlinear one-dimensional stochastic
heat equation on $[0,1]$ with polynomially growing drift and diffusion
coefficients is considered. Using techniques of the Malliavin
calculus, the authors prove, under quite general conditions, including
a very weak non-degeneracy assumption on the diffusion coefficient,
the absolute continuity of the law of the solution evaluated at fixed
points in time and space. We note that existence and uniqueness of
solution for the stochastic heat equation on $[0,1]$ driven by an
additive noise with a measurable drift having polynomial growth has
been proved in \cite{gyongy-pardoux2}.  In \cite{Fournier}, the
absolute continuity for the law of the solution to a one-dimensional
stochastic heat equation with H\"older continuous diffusion
coefficient and linearly growing drift is proved with completely
different methods. In particular, the proofs involve the Euler
approximation and techniques of harmonic analysis.

As far as applications of Malliavin calculus to non-linear parabolic
SPDEs is concerned, we should mention that the existence of the
density for the stochastic Cahn-Hilliard equation with non-Lipschitz
coefficients has been investigated in
\cite{Cardon-weber}. Furthermore, the existence and smoothness of the
density for the solution to the stochastic Burgers equation with
globally Lipschitz coefficients have been considered in
\cite{Leon-Nualart-Pet,Zaidi-Nualart}.

The paper is organized as follows. In Section \ref{sec:wpa}, we
provide the definition of mild solution to our equation (\ref{eq:1}),
state a well-posedness result for this equation and summarize some
properties of a regularized version of (\ref{eq:1}) in terms of the
Yosida approximations. Section \ref{sec:rf} is devoted to establish
the random field counterpart (i.e. {\it \`a la} Walsh \cite{Walsh}) of
our equation.  In Section \ref{sec:aux}, we collect some auxiliary
results needed for the main theorem's proof. These correspond to a
version of the chain rule for Malliavin derivatives, some properties
of time-dependent evolution operators and estimates for some
regularizations of the drift coefficient.  Eventually, in Section
\ref{sec:exis-smooth}, we state and prove the main result of the
paper, Theorem \ref{thm:main}. For this, we first deal with the
Malliavin differentiability of the solution to our equation and,
secondly, we study the invertibility of the corresponding Malliavin
matrix.

\subsection*{Notation} We shall write $a \lesssim b$ to mean that there
exists a constant $N$ such that $a \leq Nb$. To emphasize that the
constant $N$ depends on the parameters $p_1,\ldots,p_m$, we shall also
write $a \lesssim_{p_1,\ldots,p_m} b$.
We shall write $\sup$ to denote both the supremum and the essential
supremum.
For a Banach space $E$, we shall denote by $\LL^p(E)$ the space of
$E$-valued random variables $\xi$ such that $\E\|\xi\|_E^p < \infty$.


\section{Well-posedness and approximation}
\label{sec:wpa}
Let $\co$ be an open bounded domain in $\erre^d$.  $L^p$ spaces over the
domain $\co$ will be denoted without explicitly mentioning the domain,
e.g. $L^2$ stands for $L^2(\co)$. The norm in $L^p$ will be denoted by
$\|\cdot\|_p$, and the inner product of $L^2$ by
$\ip{\cdot}{\cdot}_2$, unless otherwise stated.

On a given stochastic basis $(\Omega,\mathcal{F},\mathbb{F}=
(\mathcal{F}_t)_{0 \leq t \leq T},\P)$, with $T$ a fixed positive
number, let us be given the following semilinear SPDE on $L^2$:
\begin{equation}
  \label{eq:30}
  du(t) - \Delta u(t)\,dt + f(u(t))\,dt = \eta u(t)\,dt + B\,dW(t),
  \qquad u(0)=u_0,
\end{equation}
where $\Delta$ is the Dirichlet Laplacian on $L^2$, $f:\erre \to
\erre$ is continuous, increasing, and such that $|f(x)| \lesssim
1+|x|^p$ for some $p>0$ (we denote the evaluation operator associated
to the function $f$ by the same symbol), $\eta$ is a positive number,
$W$ is a cylindrical Wiener process on $L^2$ generating the filtration
$\mathbb{F}$, $B:L^2\rightarrow L^2$ is a linear and bounded operator,
and $u_0$ is an $\mathcal{F}_0$-measurable $L^2$-valued random
variable such that $\E\|u_0\|_2^2<\infty$. All expressions involving
random quantities are meant to hold $\P$-a.s. if not otherwise
specified.

\begin{rmk}\label{rmk:lip}
  Instead of (\ref{eq:30}) one may equivalently consider an SPDE of
  the type
  \[
  du(t) - \Delta u(t)\,dt + f(u(t))\,dt = B\,dW(t),
  \qquad u(0)=u_0,
  \]
  where $f$ is quasi-monotone, i.e. such that $f+\eta$ is monotone for
  some $\eta>0$. In fact, we may write $f=(f+\eta)-\eta =
  \tilde{f}-\eta$, with $\tilde{f}$ monotone, thus obtaining
  (\ref{eq:30}) with $\tilde{f}$ replacing $f$. Furthermore, note that
  equations of the type
  \[
  du(t) - \Delta u(t)\,dt + f(u(t))\,dt + g(u(t))\,dt = B\,dW(t),
  \qquad u(0)=u_0,
  \]
  with $f$ monotone (or quasi-monotone) and $g$ (globally) Lipschitz
  are just particular cases of (\ref{eq:30}). In fact, one has
  \[
  \big(f(x)+g(x)-f(y)-g(y)\big)(x-y) \geq \big(g(x)-g(y)\big)(x-y)
  \geq -\|g\|_{\dot{C}^{0,1}} |x-y|^2,
  \]
  i.e. $f+g$ is quasi-monotone, since $f+g+\eta$ is monotone, with
  $\eta=\|g\|_{\dot{C}^{0,1}}$. (Here $\|\cdot\|_{\dot{C}^{0,1}}$
  stands for the Lipschitz norm).
\end{rmk}

We shall work with the so-called mild solution, whose definition we
recall. In the following we shall denote by $\Delta$ the realization
of the Dirichlet Laplacian on different function spaces. The same
convention we are going to use for the semigroup $S(t):=e^{t\Delta}$,
$t \geq 0$, generated by $\Delta$.
\begin{defi}
  An $L^2$-valued adapted process $u$ is a mild solution to
  equation \eqref{eq:30} if $f(u) \in L^1([0,T] \to L^2)$
  and it satisfies the integral equation
  \begin{equation} 
    \label{eq:mild} 
    u(t) + \int_0^t S(t-s) \big(f(u(s))-\eta u(s)\big)\,ds =
    S(t)u_0 + \int_0^t S(t-s)B\,dW(s)
  \end{equation}
  for all $t \in [0,T]$.
\end{defi}

As is well-known, in order for the stochastic integral in
\eqref{eq:mild} to be a well-defined (Gaussian) $L^2$-valued random
variable it is necessary (and sufficient) to assume that
\begin{equation}     \label{eq:hsc}
  \int_0^t \operatorname{Tr} \big( S(s) BB^* S(s)^* \big)\,ds < \infty
  \qquad \forall t \in [0,T].
\end{equation}
Let us also recall that this condition is weaker than requiring
$Q:=BB^*$ to be trace-class (cf. e.g. \cite[Ch.~2]{DP-K}).
We shall actually work under the following stronger standing
assumption, which guarantees, as we are going to recall, that
\eqref{eq:30} admits a unique mild solution with paths in a space of
continuous functions.
\begin{hyp}     \label{hyp:csc}
  It holds
  \[
  \E\sup_{t \leq T} \Big\| \int_0^t S(t-s)B\,dW(s) \Big\|^q_{C(\overline{\co})}
  < \infty \qquad \forall q \geq 1.
  \]
\end{hyp}
Conditions on $B$ and $\co$ implying that this hypothesis is fulfilled
are extensively discussed in the literature
(cf. e.g. \cite[Ch.~6]{cerrai-libro} and \cite[Ch.~2]{DP-K}).

\smallskip

In order to state the well-posedness and approximation results we
need, let us fix some further notation: we denote by $\C_q$, $1 \leq q
< \infty$, the space of adapted processes $u$ with values in
$C(\overline{\co})$ such that
\[
\| u \|_{\C_q} := 
\Big( \E \sup_{t\leq T} \|u(t)\|^q_{C(\overline{\co})} \Big)^{1/q}
< \infty.
\]
The following global well-posedness result holds true (see
e.g. \cite[Thm. 7.13 and Rmk. 11.23]{DZ92}, as well as
\cite[Prop.~6.2.2]{cerrai-libro}, for a proof). 
\begin{thm}     \label{thm:wp}
  Let $q \geq 1$ and assume that $u_0 \in \LL^q(C(\overline{\co}))$.
  Then equation \eqref{eq:30} admits a unique mild solution $u \in
  \C_q$.
\end{thm}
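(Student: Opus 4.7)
\medskip

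\noindent\textbf{Proof plan.} The standard strategy is to reduce to a pathwise problem via the decomposition $u = v + z$, where $z(t) := \int_0^t S(t-s) B\,dW(s)$ is the stochastic convolution. By Hypothesis \ref{hyp:csc}, $z$ belongs to $\mathbb{L}^q(C([0,T];C(\overline{\co})))$, and substituting $u = v+z$ into \eqref{eq:mild} shows that $v$ must satisfy, $\P$-a.s.,
\[
v(t) = S(t)u_0 - \int_0^t S(t-s)\bigl(f(v(s)+z(s)) - \eta(v(s)+z(s))\bigr)\,ds,
\]
which, for each fixed $\omega$, is a deterministic integral equation in $C(\overline{\co})$. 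Solving this equation pathwise and then checking integrability of $u = v + z$ is essentially the whole content of the theorem.

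To solve the random PDE I would invoke the Yosida approximation $f_\lambda := \tfrac{1}{\lambda}(\mathrm{Id} - (\mathrm{Id}+\lambda f)^{-1})$, which is monotone and globally Lipschitz. The equation with $f$ replaced by $f_\lambda$ admits a unique solution $v_\lambda \in C([0,T];C(\overline{\co}))$ by a Banach fixed-point argument in an exponentially weighted norm absorbing both $\eta$ and the Lipschitz constant of $f_\lambda$ (using that $S(\cdot)$ is a contraction semigroup on $C(\overline{\co})$). The monotonicity of $f$ is then used to derive uniform-in-$\lambda$ a priori estimates: testing the associated strong form against $|v_\lambda|^{p-2}v_\lambda$, using the dissipativity of $-\Delta$ on $L^p$ and the monotonicity of $f_\lambda$, and letting $p\to\infty$ yields
\[
\sup_{t\le T}\|v_\lambda(t)\|_{C(\overline{\co})} \lesssim \|u_0\|_{C(\overline{\co})} + \sup_{t\le T}\|z(t)\|_{C(\overline{\co})},
\]
uniformly in $\lambda$. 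A Brezis--Minty monotonicity argument then lets one pass to the limit $\lambda \to 0$ and identify the limit as a mild solution of \eqref{eq:30}.

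Uniqueness follows from the monotonicity of $f$ alone: if $u_1, u_2$ are two mild solutions, then $w := u_1 - u_2$ solves a deterministic (noise-free) mild equation whose strong form, paired with $w$ in $L^2$, gives
\[
\tfrac{1}{2}\tfrac{d}{dt}\|w(t)\|_2^2 + \|\nabla w(t)\|_2^2 + \bigl\langle f(u_1)-f(u_2), w\bigr\rangle_2 = \eta\|w(t)\|_2^2;
\]
the third term on the left is nonnegative by monotonicity, and Gronwall forces $w \equiv 0$. The $\C_q$-integrability of $u = v+z$ is then a direct consequence of the pathwise sup-norm estimate and the moment bounds on $u_0$ and $z$.

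The main obstacle is the uniform-in-$\lambda$ sup-norm bound on $v_\lambda$: since $f$ has merely polynomial growth, the mild-form contraction is only local in time, and the needed global estimate must exploit the monotone structure of $f$ together with the regularizing properties of the heat semigroup on $L^p$ for large $p$ (alternatively, a pathwise comparison argument, using that $f_\lambda$ is monotone and that $z$ has bounded sup-norm realizations, can be employed). Once this bound is secured, all remaining steps reduce to standard monotonicity and Gronwall-type arguments.
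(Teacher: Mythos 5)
Your plan is essentially the proof the paper points to: Theorem \ref{thm:wp} is quoted from the literature (\cite[Thm.~7.13]{DZ92}, \cite[Prop.~6.2.2]{cerrai-libro}), and those proofs proceed exactly by subtracting the stochastic convolution $z$, solving the resulting pathwise deterministic equation via Yosida approximation plus monotonicity/dissipativity, and concluding uniqueness with a Gronwall argument. The only slip is your uniform bound on $v_\lambda$, whose right-hand side should involve $1+\sup_{t\le T}\|z(t)\|^p_{C(\overline{\co})}$ (coming from the polynomial growth of $f$ evaluated at $z$) rather than $\sup_{t\le T}\|z(t)\|_{C(\overline{\co})}$; this is harmless here because Hypothesis \ref{hyp:csc} provides all moments of $\sup_{t\le T}\|z(t)\|_{C(\overline{\co})}$, so $u=v+z\in\C_q$ still follows.
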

Let us now introduce the regularized SPDE
\begin{equation}
  \label{eq:reg}
  du_\lambda(t) - \Delta u_\lambda(t)\,dt + f_\lambda(u_\lambda(t))\,dt 
  = \eta u_\lambda(t)\,dt + B\,dW(t),
  \qquad u(0)=u_0,
\end{equation}
where $f_\lambda$, $\lambda>0$, stands for the Yosida approximation of
$f$, i.e.
\[
f_\lambda(x) := \frac{1}{\lambda} \big(x-J_\lambda(x)\big),
\qquad J_{\lambda}(x) := (I+\lambda f)^{-1}(x).
\]
Recall that $f_\lambda$ is increasing, Lipschitz continuous with
Lipschitz constant bounded by $1/\lambda$, and $f_\lambda \to f$
pointwise as $\lambda \to 0$. Moreover, $J_\lambda$ is a contraction
and $f_\lambda=f \circ J_\lambda$ (see e.g. \cite{Bmax} for a detailed
discussion of the Yosida approximation).  It is then clear that
Theorem \ref{thm:wp} applies also to (\ref{eq:reg}), yielding the
existence and uniqueness of a mild solution $u_\lambda \in
\C_q$. Furthermore, the following convergence result holds true (see
\cite[Prop.~6.2.5]{cerrai-libro}).
\begin{prop}\label{prop:1}
  Let $q \geq 1$ and assume that $u_0 \in \LL^q(C(\overline{\co}))$.
  Then there exists a constant $N$, independent of $\lambda$, such that
  \[
  \E \sup_{t\leq T} \|u_\lambda(t)\|^q_{C(\overline{\co})} < N.
  \]
  Moreover, one has
  \[
  \lim_{\lambda\to 0}
  \E\sup_{t\leq T} \| u_\lambda(t) - u(t) \|^q_{C(\overline{\co})} = 0,
  \]
  where $u \in \C_q$ denotes the (unique) mild solution to equation
  \eqref{eq:30}. In particular, one has, for any $(t,x)\in [0,T]\times
  \overline{\co}$,
  \[
  \lim_{\lambda \to 0} \E |u_\lambda(t,x)-u(t,x)|^q =0.
  \]
\end{prop}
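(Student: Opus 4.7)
The plan is to remove the stochastic convolution and reduce everything to pathwise PDE estimates. Set $v(t) := \int_0^t S(t-s)B\,dW(s)$, which by Hypothesis~\ref{hyp:csc} has all moments of $\sup_{t\le T}\|\cdot\|_{C(\overline{\co})}$ finite. Then $y_\lambda := u_\lambda - v$ is the mild solution of the random, but non-stochastic, PDE
\[
y_\lambda(t) = S(t)u_0 + \int_0^t S(t-s)\bigl[\eta\bigl(y_\lambda(s)+v(s)\bigr) - f_\lambda\bigl(y_\lambda(s)+v(s)\bigr)\bigr]\,ds,
\]
to which I would apply $\omega$-wise $L^p$ energy estimates, with $p$ eventually sent to infinity.

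For the uniform bound, after a standard regularization to justify the chain rule, I would compute $\frac{d}{dt}\|y_\lambda(t)\|_p^p$. Integration by parts makes the Laplacian produce a non-positive contribution; the decomposition $f_\lambda(y_\lambda+v) = \bigl(f_\lambda(y_\lambda+v) - f_\lambda(v)\bigr) + f_\lambda(v)$ combined with the monotonicity of $f_\lambda$ makes the first piece contribute with the good sign (so it may be dropped), while the residual is controlled uniformly in $\lambda$ using $|f_\lambda(x)| \le |f(x)| \lesssim 1+|x|^{p_0}$ and the moment bounds on $v$. A Gronwall estimate with constants that are independent of $p$ and $\lambda$ then yields $\|y_\lambda(t)\|_p \le \Phi_t(\|u_0\|_\infty,\sup_{s\le T}\|v(s)\|_\infty)$; passing to the limit $p\to\infty$ converts this into an $L^\infty$ bound, and taking the $q$-th moment of the supremum in $t$ closes the first claim.

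For the convergence, I would apply the same scheme to the difference $u_\lambda - u_\mu$ (in which the stochastic integrals cancel). The key algebraic tool is the resolvent decomposition $x = J_\lambda x + \lambda f_\lambda(x)$, which yields
\[
\bigl(f_\lambda(x) - f_\mu(y)\bigr)(x-y) = \bigl(f(J_\lambda x) - f(J_\mu y)\bigr)(J_\lambda x - J_\mu y) + \bigl(f(J_\lambda x) - f(J_\mu y)\bigr)\bigl(\lambda f_\lambda(x) - \mu f_\mu(y)\bigr).
\]
The first summand is non-negative by monotonicity of $f$ and can be discarded; the second is quantitatively $o(1)$ as $\lambda,\mu\to 0$ on the event where $u_\lambda,u_\mu$ are uniformly bounded (secured by the first part), using the polynomial growth of $f$ together with $J_\lambda\to\mathrm{id}$. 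The same $L^p$-to-$L^\infty$ loop then produces a Cauchy estimate in $\C_q$; by completeness one has a limit $\tilde u\in\C_q$, and $\tilde u = u$ follows from the uniqueness statement in Theorem~\ref{thm:wp} combined with a dominated-convergence argument for $f_\lambda\to f$ on bounded sets. The pointwise statement is immediate from the uniform convergence.

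The principal obstacle is that the target space $C(\overline{\co})$ is not Hilbert, so classical energy estimates do not apply directly; the remedy is the $L^p$-in-space/$p\to\infty$ device, which must be executed with all constants independent of $p$, $\lambda$, and $\omega$. A secondary nuisance is that the only structural property of $f_\lambda$ that is uniform in $\lambda$ is its monotonicity: its Lipschitz constant $1/\lambda$ blows up and cannot be used, so every step has to lean on the monotonicity and on $|f_\lambda| \le |f|$ rather than on Lipschitz continuity.
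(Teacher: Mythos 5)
Your proposal is correct and follows essentially the same route as the proof the paper relies on: the paper does not prove Proposition \ref{prop:1} itself but cites \cite[Prop.~6.2.5]{cerrai-libro}, whose argument is exactly your scheme of subtracting the stochastic convolution, exploiting dissipativity of $\Delta - f_\lambda + \eta$ in $L^p$ with constants uniform in $p$ and $\lambda$, letting $p\to\infty$, and handling the Cauchy estimate via the resolvent identity $x = J_\lambda x + \lambda f_\lambda(x)$ together with $f_\lambda = f\circ J_\lambda$ and $|f_\lambda|\le|f|$. The details you flag (sign of the monotone piece against $|y_\lambda|^{p-2}y_\lambda$, pathwise uniform-in-$\lambda$ bound with all moments to control the $O(\lambda+\mu)$ remainder, identification of the limit by uniqueness) all check out.
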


\section{An alternative expression for the SPDE \eqref{eq:30}}
\label{sec:rf}
The pathwise continuity in $t$ and $x$ of the solution to equation
(\ref{eq:mild}) guaranteed by Theorem \ref{thm:wp} allows us to pass to the
random field formulation of (\ref{eq:mild}), i.e. as in \cite{Walsh}, in
a sense made precise in Proposition \ref{prop:rf} below.

It is classical that $S(t)$ is a kernel operator for all $t>0$, i.e.
there exists a function $]0,T] \times \co \times \co \ni (t,x,y)
\mapsto G_t(x,y)$, with $G_t(\cdot,\cdot) \in L^\infty(\co \times
\co)$ for all $t \in ]0,T]$, such that, for any $0 < t \leq T$,
\[
S(t)\phi = \int_\co G_t(\cdot,y)\phi(y)\,dy
\]
and
\[
\| S(t) \|_{1 \to \infty} = \| G_t(\cdot,\cdot) \|_{L^\infty(\co \times \co)},
\]
where $\|\cdot\|_{1 \to \infty}$ stands for the $L^1 \to L^\infty$
operator norm. Since the semigroup $S$ is contracting in $L^\infty$ 
i.e.  $\|S(t)f\|_{L^\infty} \leq \|f\|_{L^\infty}$ for all $t>0$, and
it holds
\[
G_{t}(x,\co) = \int_\co G_{t}(x,y)\,dy = [S(t)1_\co](x),
\]
one has
\begin{equation}    \label{eq:101}
  \sup_{x\in\co} G_{t}(x,\co) = \| S(t)1_\co \|_{L^\infty} \leq 1.
\end{equation}
We have the following result, where we use the terminology introduced
in \cite{Walsh} for stochastic integrals.
\begin{prop}
  \label{prop:rf}
  For $q \geq 1$, let $u_0 \in \LL^q(C(\co))$ and denote the unique
  mild solution in $\C_q$ to equation \eqref{eq:mild} by $u$. Setting
  $u(t,x):=[u(t)](x)$, one has, for any $(t,x) \in ]0,T] \times \co$, 
  \begin{align*}
    u(t,x) &=  \int_\co G_t(x,y) u_0(y)\,dy
    + \int_0^t\!\!\int_\co G_{t-s}(x,y)
      \big(\eta u(s,y) - f(u(s,y))\big)\,dy\,ds\\
    &\quad + \int_0^t\!\!\int_\co G_{t-s}(x,y)\,\bar{W}(ds,dy),
  \end{align*}
  where $\bar{W}$ stands for a martingale measure with covariance
  operator $Q=BB^*$. Moreover,
\begin{equation} \label{unif-sup}
\E \left( \sup_{(t,x) \in \co_T} |u(t,x)|^q \right) < \infty,
\end{equation}
where $\co_T:=[0,T] \times \co$.
\end{prop}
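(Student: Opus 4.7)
The plan is to split the statement into two parts: the random field identity, obtained term by term from the mild formulation (\ref{eq:mild}), and the supremum moment bound (\ref{unif-sup}), which turns out to be essentially a restatement of $u \in \C_q$.

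The bound (\ref{unif-sup}) is the easy half. Since $u \in \C_q$, Theorem \ref{thm:wp} guarantees that $u$ has a version with paths in $C([0,T];C(\overline{\co}))$, so that $\sup_{t \leq T}\|u(t)\|_{C(\overline{\co})}=\sup_{(t,x)\in[0,T]\times\overline{\co}}|u(t,x)|$, and a fortiori the supremum over $\co_T$ is bounded by the same quantity. The $q$-th moment is then finite by the very definition of $\C_q$.

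For the representation formula, I would evaluate each term of (\ref{eq:mild}) pointwise at $x\in\co$. The deterministic term $[S(t)u_0](x)=\int_\co G_t(x,y)u_0(y)\,dy$ is immediate from the kernel property of $S(t)$. For the drift term, I would observe that by Theorem \ref{thm:wp} the map $s\mapsto f(u(s))-\eta u(s)$ takes values in $C(\overline{\co})$, so that $S(t-s)(f(u(s))-\eta u(s))$ is a well-defined element of $C(\overline{\co})$; evaluating at $x$ and using the kernel formula gives a double integral, to which Fubini applies thanks to the bound $\sup_{x\in\co}G_{t-s}(x,\co)\le 1$ from \eqref{eq:101} together with the polynomial growth of $f$ and the bound (\ref{unif-sup}).

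The real work is the stochastic term. Here I would define a worthy martingale measure $\bar W$ in the sense of Walsh by setting, for Borel $A\subset\co$,
\[
  \bar W_t(A) := \bip{BW(t)}{1_A}_2,
\]
which is well defined as a scalar-valued cylindrical Gaussian family with covariance
\(
  \E[\bar W_t(A)\bar W_t(A')]=t\ip{Q1_{A'}}{1_A}_2,
\)
i.e.\ a martingale measure with covariance $Q=BB^*$. For a deterministic step integrand $\phi(s,y)=\sum_i 1_{(s_i,s_{i+1}]}(s)1_{A_i}(y)$, both the Walsh integral $\int_0^t\!\int_\co\phi\,d\bar W$ and the $L^2$-valued integral $\bip{\int_0^t\phi(s,\cdot)B\,dW(s)}{\,\cdot\,}_2$ coincide by construction, and the usual Itô isometry on each side reads
\[
  \E\Bigl|\int_0^t\!\!\int_\co\phi(s,y)\,\bar W(ds,dy)\Bigr|^2
  =\int_0^t\|B^*\phi(s,\cdot)\|_2^2\,ds.
\]
Applying this identity with $\phi(s,y)=G_{t-s}(x,y)$ (for fixed $(t,x)$), after an approximation by step functions justified by \eqref{eq:hsc} and \eqref{eq:101}, identifies the Walsh integral with the evaluation at $x$ of the $L^2$-valued stochastic convolution in (\ref{eq:mild}).

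The main obstacle is precisely this identification between the Da Prato--Zabczyk cylindrical integral $\int_0^t S(t-s)B\,dW(s)$ and the Walsh integral against $\bar W$; everything else is bookkeeping with Fubini and the kernel bound \eqref{eq:101}. Once this is settled, combining the three term-by-term identities yields the announced formula, and the bound (\ref{unif-sup}) closes the proof.
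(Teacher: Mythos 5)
Your proposal is correct and follows essentially the same route as the paper: both reduce everything to identifying the Da Prato--Zabczyk stochastic convolution $\int_0^t S(t-s)B\,dW(s)$ with the Walsh integral against the martingale measure of covariance $Q=BB^*$, and both obtain the bound \eqref{unif-sup} directly from Theorem \ref{thm:wp}. The only cosmetic difference is that you verify the identification via the It\^o isometry on step integrands, whereas the paper expands both integrals along the basis $\bar{e}^k=Q^{-1/2}e^k$ of $L^2_Q$ (so that $\bar{W}_{\bar{e}^k}=w_k$) and matches the resulting series term by term.
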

\begin{proof}
  It is readily seen that we only have to prove that, for any $t \in ]0,T]$,
  \begin{equation}
  \int_0^t S(t-s)B\,dW(s) = \int_0^t\!\!\int_\co G_{t-s}(\cdot,y)\,
  \bar{W}(ds,dy)
  \label{eq:422}
  \end{equation}
  as $L^2$-valued random variables. Let us assume
  (without loss of generality) that, formally,
  \begin{equation}
  W(t) = \sum_{k\in\enne} e^k \, w_k(t),
  \label{eq:423}
  \end{equation}
  where $\{e^k\}_{k\in\enne}$ is a basis of $L^2$ and
  $\{w_k\}_{k\in\enne}$ is a family of independent standard
  one-dimensional Brownian motions. Then we have, by the integral
  representation of $S(\cdot)$,
  \begin{align*}
    \int_0^t S(t-s)B\,dW(s) &= 
    \sum_{k\in\enne} \int_0^t S(t-s) Be^k\,dw_k(s)\\
    &= \sum_{k\in\enne} \int_0^t\!\!\int_\co G_{t-s}(\cdot,y)
    [Be^k](y)\, dy \,dw_k(s),
  \end{align*}
  where the series of ordinary It\^o integrals are no longer formal
  by virtue of (\ref{eq:hsc}).
  
  As far as the stochastic integral on the right-hand side of
  (\ref{eq:422}) is concerned, we notice (see e.g. \cite{Dalang-Quer})
  that it may be understood as a stochastic integral with respect to a
  cylindrical $Q$-Wiener process $\{ \bar{W}_h(t),\, h\in L^2,\, t\geq
  0\}$ in the sense of e.g. \cite{MetPell}. Namely, by definition, the
  latter is a centered Gaussian family of random variables such that,
  for any $h\in L^2$, the process $\{\bar{W}_h(t),\, t\geq 0\}$
  is a Brownian motion with variance $t\ip{Qh}{h}$ and,
  for all $s,t\geq 0$ and $h,g\in L^2$,
  \begin{equation}
   \E(\bar{W}_h(s)\bar{W}_g(t))=(s\land t)\langle Qh,g\rangle_2. 
  \label{eq:424}
  \end{equation}
  By an innocuous abuse of notation, this cylindrical $Q$-Wiener
  process will be also denoted by $\bar{W}$.  Let us introduce the
  Hilbert space $L^2_Q$, which we define as the completion of $L^2$
  with respect to $\ip{h}{g}_{L^2_Q}:=\ip{Qh}{g}_2$.  Then one can
  define the (real-valued) stochastic integral with respect to
  $\bar{W}$ of any $L^2_Q$-valued square integrable process as
  follows: let $\{\bar{e}^k\}_{k\in\enne}$ be a basis of
  $L^2_Q$ and $X \in L^2(\Omega \times [0,T] \to L^2_Q)$ (see
  e.g. \cite[Sec.~2]{Dalang-Quer}), and set
  \[
  \int_0^T\!\!\int_\co X(s,y) \,\bar{W}(ds,dy) := 
  \sum_{k\in\enne} \int_0^t \ip{X(s)}{\bar{e}^k}_{L^2_Q}\,d\bar{W}_{\bar{e}_k}(s).
  \]
  Moreover, the $L^2$-valued cylindrical Wiener process $W$ of
  \eqref{eq:423} determines a cylindrical $Q$-Wiener process $\bar{W}$,
  with $Q=BB^*$, as follows: for any $t\geq 0$ and $h \in L^2$, set
  \[
  \bar{W}_h(t) :=
  \sum_{k\in\enne} \int_0^t \ip{Be^k}{h}_2 \,dw_k(s).
  \]
  It is immediate that $\bar{W}_h(t)$ is a centered Gaussian random
  variable and
  \[
  \E \Big( \sum_k \int_0^t \ip{Be^k}{h}_2 \,dw_k(s) \Big)^2 
  = t \sum_k \ip{e^k}{B^*h}_2^2 = t \|B^*h\|^2_2 
  = t \ip{BB^*h}{h}_2 = t \ip{Qh}{h}_2.
  \]
  In a completely similar way one verifies the covariance condition
  (\ref{eq:424}).

  In order to prove (\ref{eq:422}), one just needs to take
  $\bar{e}^k:=Q^{-1/2} e^k$, where $Q^{-1/2}$ denotes the
  pseudo-inverse of $Q^{1/2}$, whence one easily verifies that
  $\bar{W}_{\bar{e}^k}(s)=w_k(s)$ and
  \[
  \int_\co G_{t-s}(\cdot,y)[B e^k](y)\,dy = \int_\co
  G_{t-s}(\cdot,y)[Q \bar{e}^k](y)\,dy.
  \]
  We have thus proved the identities
  \[
  \int_0^t S(t-s)B\,dW(s) = \sum_k \int_0^t\!\!\int_\co
  G_{t-s}(\cdot,y) [Be^k](y)\,dw_k(s) = \int_0^t\!\!\int_\co
  G_{t-s}(\cdot,y)\,\bar{W}(ds,dy).
  \]
  Finally, the estimate (\ref{unif-sup}) is an 
  immediate consequence of Theorem \ref{thm:wp}. 
\end{proof}

\section{Auxiliary results}
\label{sec:aux}
We collect here some tools that we shall need in the next section.  In
particular, we give a version of the chain rule for the Malliavin
derivative, where a (Malliavin) differentiable random variable is
composed with an increasing function of polynomial growth. The result,
also without the monotonicity assumption, is certainly well-known to
experts (cf. e.g. \cite[p.36]{Malliavin}), but we include a proof for
completeness and because most standard references contain only a proof
for functions of class $C^1_b$
(cf. e.g. \cite[Prop. 1.2.3]{nualart}). Moreover, we prove some
estimates for evolution operators generated by time-dependent bounded
perturbations of the Laplacian and for regularizations via Yosida
approximations as well as via mollification.

\smallskip

As usual, a function $\phi:\erre\to\erre$ is said to be of polynomial
growth if there exists $p \in \enne$ such that $|\phi(x)| \lesssim
1+|x|^p$ for all $x\in\erre$. We shall denote by $\Cpol^k(\erre)$ the
space of functions $\phi \in C^k(\erre)$ such that $\phi$,
$\phi',\ldots,\phi^{(k)}$ are of polynomial growth. It is not
difficult to see that, by the fundamental theorem of calculus, one can
equivalently say that $\Cpol^k(\erre)$ is the space of functions $\phi
\in C^k(\erre)$ such that $\phi^{(k)}$ is of polynomial growth, and
also that it is the space of functions $\phi \in C^k(\erre)$ for which
there exists $p \in \enne$ such that
\[
|\phi(x)| + |\phi'(x)| + \cdots + |\phi^{(k)}(x)| \lesssim 1 + |x|^p.
\]

\subsection{Malliavin calculus}
We shall repeatedly use the following strong-weak closability property
of the Malliavin derivative (cf. e.g. \cite[p.~78]{nualart-LNM}). We
use standard notation and terminology (see e.g. \cite{nualart}).
\begin{lemma}     \label{lm:clos}
  Let $k\in\enne$, $p \in ]1,\infty[$. Assume that $\lim_{n\to\infty}
  X_n=X$ in $L^p(\Omega)$ and $\sup_{n\in\enne}
  \|X_n\|_{\D^{k,p}}<\infty$. Then $X \in \D^{k,p}$ and $X_n \to X$
  weakly in $\D^{k,p}$.
\end{lemma}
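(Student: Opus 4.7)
The plan is to exploit reflexivity: for $p \in {]1,\infty[}$ the Sobolev-type space $\D^{k,p}$ is reflexive because it embeds isometrically, via $F \mapsto (F, DF, D^2 F, \ldots, D^k F)$, as a closed subspace of the reflexive product space $L^p(\Omega) \oplus L^p(\Omega;H) \oplus \cdots \oplus L^p(\Omega;H^{\otimes k})$, where $H$ denotes the Cameron--Martin space of $W$. The proof then follows the standard ``bounded sequence in a reflexive space $+$ identification of the limit'' pattern.

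First I would invoke the Eberlein--\v{S}mulian theorem (or Banach--Alaoglu combined with reflexivity): from the hypothesis $\sup_n \|X_n\|_{\D^{k,p}} < \infty$ one extracts a subsequence $\{X_{n_j}\}$ that converges weakly in $\D^{k,p}$ to some element $Y \in \D^{k,p}$. Next I would identify $Y$ with $X$. Weak convergence in $\D^{k,p}$ implies, through the continuous inclusion $\D^{k,p} \hookrightarrow L^p(\Omega)$, weak convergence of $X_{n_j}$ to $Y$ in $L^p(\Omega)$. On the other hand, the assumed strong convergence $X_n \to X$ in $L^p(\Omega)$ entails weak convergence in the same space, and uniqueness of weak limits in $L^p(\Omega)$ forces $Y = X$ almost surely. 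In particular $X \in \D^{k,p}$ and, by the weak lower semicontinuity of the norm, $\|X\|_{\D^{k,p}} \leq \liminf_j \|X_{n_j}\|_{\D^{k,p}} < \infty$.

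Finally, to upgrade convergence of the subsequence to weak convergence of the full sequence, I would run a standard subsequence-of-subsequence argument. Suppose for contradiction that $X_n \not\to X$ weakly in $\D^{k,p}$; then there exist a continuous linear functional $\Lambda \in (\D^{k,p})^*$, a number $\varepsilon > 0$, and a subsequence $\{X_{n_k}\}$ with $|\Lambda(X_{n_k}) - \Lambda(X)| \geq \varepsilon$ for every $k$. This subsequence is still bounded in $\D^{k,p}$ and still converges strongly to $X$ in $L^p(\Omega)$, so applying the first two steps to it yields a further subsequence converging weakly to $X$ in $\D^{k,p}$, contradicting the lower bound on $|\Lambda(\cdot) - \Lambda(X)|$.

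I do not anticipate a serious obstacle: the only nontrivial ingredient is the reflexivity of $\D^{k,p}$, which is standard and follows from the isometric embedding described above together with the fact that closed subspaces of reflexive Banach spaces are reflexive. Everything else is a routine application of weak-compactness and uniqueness of limits.
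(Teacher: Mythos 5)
Your argument is correct. The paper does not actually prove Lemma \ref{lm:clos} --- it only cites \cite[p.~78]{nualart-LNM} --- and your proof is essentially the standard one given in that reference: reflexivity of $\D^{k,p}$ (via the isometric, closed embedding $F \mapsto (F, DF, \dots, D^kF)$ into a product of reflexive $L^p$ spaces, which is closed precisely because the iterated Malliavin derivative is a closed operator), weak sequential compactness of bounded sets, identification of the weak limit with $X$ through the continuous inclusion into $L^p(\Omega)$, and the subsequence-of-subsequences argument to get weak convergence of the whole sequence. The only cosmetic difference from the usual presentation is that you package the componentwise weak convergence of $X_n$ and of each $D^jX_n$ into a single weak convergence statement in $\D^{k,p}$, which is harmless.
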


We shall denote the Gaussian Hilbert space ``supporting'' the
Malliavin calculus by $H$. Moreover, we shall say that $X \in
\D^{k,\infty}$ if $X \in \D^{k,p}$ for all $p>1$.
\begin{lemma}     \label{lm:chain}
  Let $X \in \D^{1,\infty}$, $f\in \Cpol^1(\erre)$ and increasing. Then
  $f(X) \in \D^{1,\infty}$ and $Df(X)=f'(X) DX$.
\end{lemma}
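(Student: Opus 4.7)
The plan is to approximate $f$ by smooth, compactly supported functions $f_n$, apply the standard chain rule for $C^1$ functions with bounded derivative to each $f_n(X)$, and pass to the limit via the closability property recorded in Lemma \ref{lm:clos}.

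Concretely, fix $\eta \in C^\infty_c(\erre)$ with $\eta \equiv 1$ on $[-1,1]$ and $0 \leq \eta \leq 1$, and set $f_n(x) := f(x)\,\eta(x/n)$. Each $f_n$ belongs to $C^1_c(\erre)$, so the standard chain rule (e.g.\ \cite[Prop.~1.2.3]{nualart}) gives $f_n(X) \in \D^{1,q}$ for every $q \in ]1,\infty[$, with $D f_n(X) = f_n'(X)\,DX$. By construction $f_n \to f$ and $f_n' \to f'$ pointwise on $\erre$ (indeed, $f_n = f$ and $f_n' = f'$ on $[-n,n]$), and the assumption $f \in \Cpol^1(\erre)$, combined with $|\eta'| \leq C$, yields some $p \in \enne$ and an $n$-independent constant such that
\[
|f_n(x)| + |f_n'(x)| \lesssim 1 + |x|^p \qquad \text{for all } x \in \erre,\ n \geq 1.
\]
The monotonicity assumption is not used here, consistently with the remark preceding the statement.

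Since $X \in \D^{1,\infty}$, both $|X|$ and $\|DX\|_H$ have moments of all orders. The uniform polynomial bound combined with dominated convergence gives $f_n(X) \to f(X)$ in $L^q(\Omega)$ for every $q \in ]1,\infty[$, and the same bound together with H\"older's inequality gives $f_n'(X)\,DX \to f'(X)\,DX$ in $L^q(\Omega;H)$ for every such $q$. In particular $\sup_n \|f_n(X)\|_{\D^{1,q}} < \infty$, so Lemma \ref{lm:clos} yields $f(X) \in \D^{1,q}$ with $D f_n(X) \to D f(X)$ weakly in $L^q(\Omega;H)$; the strong convergence above identifies this weak limit with $f'(X)\,DX$. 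Letting $q$ range over $]1,\infty[$ gives $f(X) \in \D^{1,\infty}$ together with the claimed formula $Df(X) = f'(X)\,DX$.

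I do not expect any serious obstacle: the only mildly delicate point is arranging the approximating sequence so that $f_n$ \emph{and} $f_n'$ satisfy uniform polynomial bounds and both converge pointwise to $f$ and $f'$. The product-with-cutoff construction above achieves this cleanly without invoking the monotonicity of $f$; all remaining steps are routine applications of dominated convergence, H\"older's inequality, and the closability lemma.
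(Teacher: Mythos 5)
Your proof is correct, and it follows the same overall template as the paper's: approximate $f$ by $C^1_b$ functions, apply the classical chain rule to each approximant, establish a uniform bound in $\D^{1,q}$, invoke Lemma \ref{lm:clos}, and identify the weak limit of the derivatives through the strong $L^q(\Omega;H)$ convergence of $f_n'(X)\,DX$. The genuine difference lies in the choice of approximating sequence. The paper uses the Yosida regularization $f_\lambda = f \circ J_\lambda$, for which the explicit formula $f_\lambda' = f'(J_\lambda)/(1+\lambda f'(J_\lambda))$, together with $f'\geq 0$ and the contractivity of $J_\lambda$, yields the uniform polynomial bound $|f_\lambda'(x)| \lesssim 1+|x|^p$; you instead multiply by a smooth cutoff $\eta(\cdot/n)$, which gives $f_n \in C^1_c$ and the same uniform bound by a one-line computation, with $f_n=f$ and $f_n'=f'$ on $[-n,n]$ ensuring pointwise convergence. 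Your route is more elementary and, as you correctly note, dispenses with the monotonicity of $f$ entirely: the resolvent $J_\lambda=(I+\lambda f)^{-1}$ is only guaranteed to be a well-defined single-valued contraction because $f$ is increasing, so the paper's argument genuinely uses that hypothesis, whereas yours proves the more general statement the authors allude to just before the lemma. What the Yosida route buys in exchange is coherence with the rest of the paper: the same approximants $f_\lambda$ drive the regularized SPDE \eqref{eq:reg}, and the pointwise convergence $f_\lambda' \to f'$ established inside the paper's proof is reused later in Lemma \ref{lm:Yop}(iii). Both arguments are complete; yours is a clean, self-contained alternative.
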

\begin{proof}
  Assume, without loss of generality, that $|f(x)|+|f'(x)| \lesssim
  1+|x|^p$, with $p \in \enne$. Let $f_\lambda$, $\lambda>0$, denote
  the Yosida approximation of $f$. Since $f_\lambda \to f$ pointwise
  and $f$ is continuous, one has $f_\lambda(X) \to f(X)$ $\P$-a.s. as
  $\lambda \to 0$. Moreover, recalling that $|f_\lambda| \leq |f|$,
  hence that, for any $q \geq 1$,
  \[
  |f_\lambda(x)-f(x)|^q \leq 2^q|f(x)|^q \lesssim 1 + |x|^{qp},
  \]
  and $\E|X|^{qp}<\infty$, the dominated convergence theorem yields
  \begin{equation}         \label{eq:322}
    \lim_{\lambda \to 0} \E|f_\lambda(X)-f(X)|^q = 0
    \qquad \forall q \geq 1.
  \end{equation}
  Appealing to the inverse function theorem, it is easy to show that
  $f \in C^1$ implies $f'_\lambda \in C_b$, and
  \[
  J'_\lambda(x) = \frac{1}{1 + \lambda f'(J_\lambda(x))}, \qquad
  f'_\lambda(x) = \frac{f'(J_\lambda(x))}{1+\lambda f'(J_\lambda(x))}.
  \]
  In particular, since $f_\lambda$ is Lipschitz continuous, one has
  $f_\lambda \in C^1_b$, so that the ``classical'' chain rule
  (see e.g. \cite[Prop.~1.2.3]{nualart}) implies that $f_\lambda(X) \in
  \D^{1,\infty}$ and $Df_\lambda(X) = f_\lambda'(X) DX$.
  Let us check now that, for any $q\geq 1$, it holds
  \begin{equation}     \label{eq:334}
    \sup_{\lambda>0} \E \|D f_\lambda(X)\|_H^q < \infty.
  \end{equation}
  By the above expression for $f'_\lambda$ and $f' \geq 0$ it
  immediately follows that
  \[
  |f'_\lambda(x)| \leq |f'(J_\lambda(x))| \lesssim
  1 + |J_\lambda(x)|^p \leq 1 + |x|^p,
  \]
  hence, by Cauchy-Schwarz' inequality,
  \[
  \E \|D f_\lambda(X)\|_H^q \leq \big( \E |f_\lambda'(X)|^{2q}
  \big)^{1/2} \big( \E \|D X\|_H^{2q} \big)^{1/2}
  \lesssim \big( 1 + \E|X|^{2pq} \big)^{1/2}
  \big( \E \|DX\|_H^{2q} \big)^{1/2}.
  \]
  The right-hand side is finite and independent of $\lambda$, thus
  (\ref{eq:334}) is proved. In particular, (\ref{eq:322}) and
  (\ref{eq:334}) imply that $f(X) \in \D^{1,\infty}$ and
  $Df_\lambda(X)$ converges to $Df(X)$ weakly in $\LL^q(H)$ as $\lambda
  \to 0$, for all $q\geq 1$. Since the weak limit is unique, in order
  to prove that $Df(X)=f'(X) DX$, it suffices to show that
  \[
  \lim_{\lambda \to 0} \E \|Df_\lambda(X) - f'(X) DX\|_H^q = 0.
  \]
  Observe that
  \begin{align*}
    \E \|Df_\lambda(X) - f'(X) DX\|_H^q &=
    \E \big( |f'_\lambda(X)-f'(X)|^q \|DX\|_H^q \big) \\
    & \leq \big( \E |f'_\lambda(X)-f'(X)|^{2q} \big)^{1/2}
      \big(\E\|DX\|_H^{2q} \big)^{1/2},
  \end{align*}
  and that, by the expression of $f'_\lambda$, one has $f'_\lambda \to
  f'$, hence $f'_\lambda(X) \to f'(X)$ $\P$-a.s. as $\lambda \to
  0$. Moreover, taking into account that
  \[
  |f'_\lambda(x)-f'(x)| = \Big| \frac{f'(J_\lambda(x))}{1+\lambda
    f'(J_\lambda(x))} - f'(x) \Big| \leq |f'(J_\lambda(x))| + |f'(x)|
  \lesssim 1 + |x|^p,
  \]
  one also has
  \[
  |f'_\lambda(X)-f'(X)|^{2q} \lesssim 1 + |X|^{2pq},
  \]
  hence, by the dominated convergence theorem, $f'_\lambda(X) \to
  f'(X)$ in $\LL^{2q}$ as $\lambda \to 0$, and the proof is finished.
\end{proof}

\subsection{Time-dependent evolution operators}
Using the notation introduced in Section \ref{sec:wpa}, let
$F:[0,T] \to L_+^\infty$, and consider the following linear evolution
equation on $L^2$:
\begin{equation}     \label{eq:gnrc}
  dy(t) - \Delta y(t) + F(t)y(t) = 0, \qquad y(s)=y_0 \in L^2,
  \qquad 0 \leq s \leq t \leq T.
\end{equation}
Here $L^p_+$, $p\in[1,\infty]$, denotes the set of (equivalence
classes of) functions $\phi \in L^p$ such that $\phi \geq 0$
a.e.. The evolution operator $U(t,s)$ is then defined by
$y(t)=:U(t,s)y_0$.
\begin{prop}     \label{prop:eto}
  For any $0 \leq s < t \leq T$, the following properties hold
  true:
  \begin{itemize}
  \item[(i)] $U(t,s)$ is positivity preserving, i.e. $y_0 \geq 0$
    implies $U(t,s)y_0 \geq 0$;
  \item[(ii)] $U(t,s) \leq S(t-s)$, i.e. $y_0 \geq 0$ implies
    $U(t,s)y_0 \leq S(t-s)y_0$;
  \item[(iii)] $U(t,s)$ is ultracontractive, i.e. its $L^1 \to L^\infty$
    norm is finite.
  \item[(iv)] $U(t,s)$ is a kernel operator, i.e. there exists a
    function $k:[0,T]^2 \times \co^2 \rightarrow \erre_+$ such that
    \[
    \big[U(t,s)\phi\big](x) = \int_\co k(t,s;x,y)\phi(y)\,dy.
    \]
  \end{itemize}
\end{prop}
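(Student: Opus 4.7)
The plan is to first construct $U(t,s)$ as the unique solution of the Duhamel equation
\[
U(t,s)\phi = S(t-s)\phi - \int_s^t S(t-r)F(r)U(r,s)\phi\,dr,
\]
which is solvable by standard Picard iteration on $L^2$ since $F \in L^\infty([0,T];L^\infty)$ and $S$ is a contraction semigroup. For $y_0 \in L^2$ one checks that $y(\cdot) := U(\cdot,s)y_0$ lies in the variational class $C([s,T];L^2) \cap L^2(s,T;H^1_0)$ with $y' \in L^2(s,T;H^{-1})$, which is the appropriate framework in which to run parabolic maximum-principle arguments.

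Parts (i) and (ii) I would handle via Stampacchia-type truncation. For (i), given $y_0 \geq 0$, test $y' = \Delta y - F(t)y$ against $-y^-$ and apply the chain rule to obtain
\[
\frac{1}{2}\frac{d}{dt}\|y^-\|_2^2 + \|\nabla y^-\|_2^2 + \int_\co F(t)(y^-)^2\,dx = 0.
\]
Since $F(t) \geq 0$ and $y^-(s) = 0$, this forces $y^- \equiv 0$. For (ii), setting $z := S(\cdot-s)y_0 - U(\cdot,s)y_0$ gives $z(s) = 0$ and $z' - \Delta z = F(t)U(t,s)y_0 \geq 0$ by (i); the analogous test against $z^-$ then yields $z \geq 0$, i.e., $U(t,s)y_0 \leq S(t-s)y_0$.

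For (iii), given $\phi \in L^1 \cap L^2$, decomposing $\phi = \phi^+ - \phi^-$ and combining positivity with (ii) yields the pointwise bound $|U(t,s)\phi| \leq U(t,s)|\phi| \leq S(t-s)|\phi|$, so
\[
\|U(t,s)\phi\|_\infty \leq \|S(t-s)\|_{1\to\infty}\|\phi\|_1,
\]
the right-hand side being finite for $t>s$ thanks to the bounded kernel $G_{t-s}$ recalled in Section \ref{sec:rf}; density then extends the bound to all of $L^1$. Finally, (iv) follows from the standard representation theorem (essentially Dunford--Pettis) for bounded linear operators $L^1 \to L^\infty$: any such operator is given by integration against a kernel in $L^\infty(\co \times \co)$, and positivity from (i) ensures this kernel is non-negative a.e.

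The main obstacle is the rigorous justification of the truncation arguments in (i) and (ii): one must first confirm that the Duhamel-defined $U(t,s)y_0$ genuinely lies in $L^2(s,T;H^1_0) \cap H^1(s,T;H^{-1})$, so that $y^-$ and $z^-$ are admissible test functions and the classical chain rule for $\tfrac{d}{dt}\|y^-\|_2^2$ applies. Once this regularity is in hand, the remaining parts reduce to routine semigroup and integral-operator considerations.
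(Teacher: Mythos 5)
Your proof is correct and follows essentially the same route as the paper's: truncation/energy identities with $y^-$ and $z^-$ for (i) and (ii), the pointwise domination $|U(t,s)\phi|\leq U(t,s)|\phi|\leq S(t-s)|\phi|$ for (iii), and the Dunford--Pettis kernel representation for (iv). The only difference is that you make explicit the Duhamel construction of $U(t,s)$ and the variational regularity needed to justify the test-function computations, which the paper dispatches by assuming a strong solution ``without loss of generality.''
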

\begin{proof}
  (i) Let $y_0 \geq 0$ and $y(\cdot)$ be a strong solution (without loss
  of generality) of (\ref{eq:gnrc}), where we assume, for simplicity,
  $s=0$. Taking the scalar product with $y^-$ and integrating with
  respect to time, we obtain, denoting the $L^2$ norm by $\|\cdot\|$,
  \[
  \frac 12 \|y^-(t)\|^2 + \int_0^t \|\nabla y^-(r)\|^2\,dr 
  + \int_0^t \bip{F(r)y^-(r)}{y^-(r)}\,dr = \|y_0^-\|^2,
  \]
  hence $\|y^-(t)\|^2 \leq 2\, \|y_0^-\|^2 = 0$, i.e. $y(t) \geq 0$ for
  all $t \in [0,T]$.
  (ii) Let $z$ be the solution to $z'-\Delta z=0$, $z(0)=y_0$. Then one has
  \[
  \frac{d}{dt}(y-z)(t) - \Delta (y-z)(t) + F(t)y(t) = 0, \qquad
  (y-z)(0)=0,
  \]
  hence, taking the scalar product with $(y-z)^+$ and integrating,
  \[
  \big\| (y(t)-z(t))^+ \big\|^2 + \int_0^t \big\| \nabla(y(r)-z(r))^+
  \big\|^2\,dr + \int_0^t \bip{F(r)y(r)}{(y(r)-z(r))^+}\,ds = 0,
  \]
  which yields, recalling that, by (i), $y(r) \geq 0$ for all $r$,
  $y(t) \leq z(t)$ for all $t$. (iii) Note that, by (i), one has
  \begin{align*}
    |U(t,s)y_0| = |U(t,s)y_0^+ - U(t,s)y_0^-| &\leq
    |U(t,s)y_0^+| + |U(t,s)y_0^-|\\
    &= U(t,s)y_0^+ + U(t,s)y_0^- = U(t,s)|y_0|,
  \end{align*}
  therefore
  \[
  |U(t,s)y_0| \leq U(t,s)|y_0| \leq S(t-s)|y_0| \in L^\infty,
  \]
  which immediately implies $U(t,s)y_0 \in L^\infty$. (iv) is a direct
  consequence of (iii), thanks to a classical criterion of Dunford and
  Pettis (see e.g. \cite[Ch.~XI,~\S1]{Kantorovich}).
\end{proof}

\subsection{Regularizations}
In the next Lemma we use the notation introduced immediately after
Theorem \ref{thm:wp}.
\begin{lemma}     \label{lm:Yop}
  Let $f \in \Cpol^m(\erre)$ be an increasing function such that
  $|f^{(n)}(x)| \lesssim 1 + |x|^p$ for all $n=0,1,\ldots,m$. Then the
  following properties hold:
  \begin{itemize}
  \item[(i)] $f_\lambda$ and $J_\lambda$ belong to $C^m(\erre)$;
  \item[(ii)] for $\lambda \leq 1$, there exists $q \in \enne$,
    independent of $\lambda$, such that $|f^{(n)}_\lambda(x)| \lesssim
    1 + |x|^q$ for all $n=0,1,\ldots,m$.
  \item[(iii)] $f^{(n)}_\lambda$ converges pointwise to $f^{(n)}$ as
    $\lambda \to 0$.
  \end{itemize}
\end{lemma}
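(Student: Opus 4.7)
\emph{Proof plan.} The key identity throughout is the resolvent relation $J_\lambda(x) + \lambda f(J_\lambda(x)) = x$, equivalent to $f_\lambda = f \circ J_\lambda$ and $J_\lambda = I - \lambda f_\lambda$. For part (i), the map $I + \lambda f$ is of class $C^m$ and strictly increasing with derivative $1 + \lambda f' \geq 1$, hence a $C^m$-diffeomorphism of $\erre$ by the inverse function theorem; therefore $J_\lambda \in C^m(\erre)$, and $f_\lambda = (I - J_\lambda)/\lambda \in C^m(\erre)$. Before attacking (ii) and (iii), I would record two a priori bounds. First, the standard inequality $|f_\lambda(x)| \leq |f(x)|$ (verified separately in the two cases $J_\lambda(x) < x$ and $J_\lambda(x) > x$, using monotonicity of $f$) gives $|J_\lambda(0)| = \lambda|f_\lambda(0)| \leq \lambda|f(0)|$. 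Second, since $f' \geq 0$ makes $J_\lambda$ a contraction, $|J_\lambda(x)| \leq |J_\lambda(0)| + |x|$, and these combine into the linear bound
\[
|J_\lambda(x)| \leq |f(0)| + |x| \qquad (\lambda \leq 1).
\]

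For part (ii), the polynomial growth of the $f^{(n)}$ together with this linear bound immediately gives $|f^{(n)}(J_\lambda(x))| \lesssim 1 + |x|^p$ uniformly in $\lambda \leq 1$. The main step is to show, by induction on $n \geq 1$, that $|J_\lambda^{(n)}(x)| \lesssim_{n} 1 + |x|^{q_n}$ uniformly in $\lambda \leq 1$ for some $q_n \in \enne$. I would apply Fa\`a di Bruno to $f \circ J_\lambda$, differentiate the resolvent identity $n$ times, and isolate the single term carrying $J_\lambda^{(n)}$, arriving at
\[
\bigl(1 + \lambda f'(J_\lambda(x))\bigr)\, J_\lambda^{(n)}(x)
= -\lambda \sum_{\pi \in P(n),\,|\pi| \geq 2} f^{(|\pi|)}(J_\lambda(x)) \prod_{B \in \pi} J_\lambda^{(|B|)}(x),
\]
where $P(n)$ denotes the set of partitions of $\{1,\ldots,n\}$. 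Since $1 + \lambda f'(J_\lambda) \geq 1$ and every block $B$ on the right has size strictly less than $n$, the induction closes; it in fact yields the sharper statement $|J_\lambda^{(n)}(x)| \lesssim_{n} \lambda(1 + |x|^{q_n})$ for $n \geq 2$. A second application of Fa\`a di Bruno, this time to $f_\lambda = f \circ J_\lambda$, then produces the claimed polynomial bound on $|f_\lambda^{(n)}(x)|$ uniformly in $\lambda \leq 1$.

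For part (iii), the resolvent identity gives $|J_\lambda(x) - x| = \lambda|f_\lambda(x)| \leq \lambda|f(x)| \to 0$, so $J_\lambda(x) \to x$ and $f^{(k)}(J_\lambda(x)) \to f^{(k)}(x)$ by continuity. The sharper inductive bound from (ii) yields $J_\lambda^{(n)}(x) \to 0$ for $n \geq 2$, and the explicit formula $J_\lambda'(x) = 1/(1 + \lambda f'(J_\lambda(x)))$ gives $J_\lambda'(x) \to 1$. In the Fa\`a di Bruno expansion
\[
f_\lambda^{(n)}(x) = \sum_{\pi \in P(n)} f^{(|\pi|)}(J_\lambda(x)) \prod_{B \in \pi} J_\lambda^{(|B|)}(x),
\]
every partition containing a block of size $\geq 2$ contributes a factor tending to zero, so only $\pi = \{\{1\},\ldots,\{n\}\}$ survives in the limit, leaving $f^{(n)}(x) \cdot 1^n = f^{(n)}(x)$. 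The main obstacle is the bookkeeping in coupling Fa\`a di Bruno with the induction in (ii), but once the linear bound on $J_\lambda$ is in place everything proceeds routinely.
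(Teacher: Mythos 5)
Your proof is correct and follows essentially the same route as the paper's: the resolvent identity $J_\lambda+\lambda f(J_\lambda)=I$ combined with Fa\`a di Bruno's formula and an induction that isolates the top-order derivative via $1+\lambda f'(J_\lambda)\ge 1$. The only (harmless) difference is that you run the induction on $J_\lambda^{(n)}$ rather than on $f_\lambda^{(n)}$ (the two being related by $J_\lambda^{(n)}=-\lambda f_\lambda^{(n)}$ for $n\ge 2$), which makes the $O(\lambda)$ decay used in part (iii) somewhat more transparent than the paper's treatment of its remainder term $S_{n-1}$, and you are more careful about the additive constant in the bound $|J_\lambda(x)|\le |f(0)|+|x|$.
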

\begin{proof}
  Since $J_\lambda=(I+\lambda f)^{-1}$, and $f' \geq 0$, the inverse
  function theorem implies that, if $f$ is of class $C^m$, then also
  $J_\lambda$ is of class $C^m$. Moreover, the identity $\lambda
  f_\lambda = I-J_\lambda$ implies that also $f_\lambda$ is of class
  $C^m$, hence (i) is proved.
\smallskip\par\noindent
  (ii) The polynomial growth of $f_\lambda$ is obvious by the inequality
  $|f_\lambda| \leq |f|$. Moreover, recalling that $f_\lambda=f \circ
  J_\lambda$, that $J_\lambda$ is of class $C^m$ and is a contraction,
  we have
  \[
  |f'_\lambda(x)| = |f'(J_\lambda(x))|\,|J'_\lambda(x)| 
  \leq |f'(J_\lambda(x))|
  \lesssim 1+|J_\lambda(x)|^p \leq 1+|x|^p.
  \]
  Taking into account that $\lambda f''_\lambda=J''_\lambda$ and that
  \[
  f''_\lambda = f''(J_\lambda)(J'_\lambda)^2 + f'(J_\lambda)J''_\lambda,
  \]
  one gets $(1+\lambda f'(J_\lambda)) f''_\lambda =
  f''(J_\lambda)(J'_\lambda)^2$, which implies
  \begin{equation}     \label{eq:iter2}
  |f''_\lambda(x)| \leq |f''(J_\lambda(x))| 
  \lesssim 1+|J_\lambda(x)|^p \leq 1+|x|^p.
  \end{equation}
  Unfortunately it does not seem possible to extend such elementary
  arguments to obtain the polynomial growth of $f^{(n)}_\lambda$. We
  can nonetheless argue as follows: by Arbogast's
  formula\footnote{This formula is better known as Fa\`a di Bruno's
    formula. The latter attribution, however, seems to be historically
    incorrect.}  (see e.g. \cite{Gould}) one has
  \[
  f^{(n)}_\lambda = \sum \frac{n!}{b_1!b_2! \cdots b_n!} f^{(k)}(J_\lambda)
  \Big( \frac{J'_\lambda}{1!} \Big)^{b_1}
  \Big( \frac{J''_\lambda}{2!} \Big)^{b_2} \cdots
  \Big( \frac{J^{(n)}}{n!} \Big)^{b_n},
  \]
  where the sum is taken over all possible combinations of
  $\{b_1,b_2,\ldots,b_n\} \subset \enne \cup \{0\}$ such that
  \[
  b_1+2b_2+\cdots+nb_n=n
  \qquad \text{and} \qquad
  b_1+b_2+\cdots+b_n=k.
  \]
  In particular, note that there is only one possible term of the
  series containing $f'(J_k)$, precisely the one corresponding to
  $b_1=b_2=\cdots=b_{n-1}=0$, $b_n=1$, that is
  $f'(J_\lambda)J^{(n)}_\lambda$. Similarly, there is only one
  possible term containing $f^{(n)}(J_\lambda)$, precisely the one
  corresponding to $b_1=n$, $b_2=b_3=\cdots=b_n=0$, that is
  $f^{(n)}(J_\lambda)(J'_\lambda)^n$. Recalling that $\lambda
  f^{(n)}_\lambda = -J^{(n)}_\lambda$, we have
  \begin{equation}     \label{eq:arbo}
  (1+\lambda f'(J_\lambda)) f^{(n)}_\lambda =
  f^{(n)}(J_\lambda)(J'_\lambda)^n + S_{n-1},
  \end{equation}
  hence
  \[
  \big| f^{(n)}_\lambda \big| \leq 
  \big| f^{(n)}(J_\lambda) \big| + |S_{n-1}|,
  \]
  where $S_{n-1}$ is a finite sum of terms involving only
  $f^{(k)}(J_\lambda)$ and powers of $J^{(k)}_\lambda$, for
  $k=1,\ldots,n-1$. Using once again that $\lambda
  f^{(k)}_\lambda=-J^{(k)}_\lambda$, we conclude that $S_{n-1}$ is a
  finite sum of terms involving only $f^{(k)}(J_\lambda)$ and powers
  of $\lambda f^{(k)}_\lambda$, for $k=1,\ldots,n-1$. Taking $\lambda
  \leq 1$, since $|f^{(k)}(J_\lambda(x))| \lesssim 1+|x|^p$ for all
  $k=1,\ldots,n$, recalling that $f''_\lambda$ satisfies
  \eqref{eq:iter2}, we obtain that there exists $q_3 \in \enne$ such
  that $|f^{(3)}_\lambda(x)| \lesssim 1+|x|^{q_3}$. By iteration one
  ends up with $|f^{(k)}_\lambda(x)| \lesssim 1+|x|^{q_k}$ for each
  $k=0,1,\ldots,m$. Since $m$ is finite, this implies the claim.
%
%
  (iii) It is known that $f_\lambda \to f$ pointwise as $\lambda \to
  0$. That $f'_\lambda \to f'$ pointwise as $\lambda \to 0$ has been
  proved in Lemma \ref{lm:chain}. Passing to the limit (pointwise) as
  $\lambda \to 0$ in \eqref{eq:arbo} we obtain
  \[
  \lim_{\lambda \to 0} f^{(n)}_\lambda = f^{(n)} 
  + \lim_{\lambda \to 0} S_{n-1},
  \]
  where we have used that $J_\lambda \to I$ and
  \[
  \lim_{\lambda \to 0} J'_\lambda(x) = \lim_{\lambda \to 0}
  \frac{1}{1+\lambda f'(J_\lambda(x))} = 1.
  \]
  The claim is proved if we show that $S_{n-1} \to 0$ pointwise as
  $\lambda \to 0$. For $n=2$ one has $S_1=0$, hence the claim
  holds. For $n \geq 3$, we observe that each term in $S_{n-1}$ is of
  the form
  \[
  c f^{(i)}(J_\lambda) \big(J'_\lambda\big)^{h_1}
  \big(J''_\lambda\big)^{h_2} \cdots
  \big(J^{(n-1)}_\lambda\big)^{h_{n-1}},
  \]
  where $c$ is a positive number, $1 \leq i,h_1 \leq n-1$, and
  $h_2,h_3,\ldots,h_{n-1}$ are nonnegative integers with at least one
  of them, say $h_s$, greater than 1. Let $2 \leq \sigma:=h_s$. Recall that
  \[
    J^{(\sigma)}_\lambda = -\lambda
  f^{(\sigma)}_\lambda,
  \]
  therefore the generic term of $S_{n-1}$, hence $S_{n-1}$ itself,
  converges to zero as $\lambda \to 0$.
\end{proof}

Let us introduce mollifiers in the following (standard) way: for
$\zeta \in C^\infty(\erre)$ positive, with support contained in
$[-1,1]$ and $\int_\erre \zeta=1$, set, for any $\beta>0$,
$\zeta_\beta(x):=\beta^{-1}\zeta(x/\beta)$.
\begin{lemma}     \label{lm:0}
  Let $f:\erre \to \erre$ be such that $|f(x)| \lesssim 1+|x|^p$, and
  $f_\beta=f\ast\zeta_\beta$, $\beta \leq 1$.  Then one has
  \[
  |f_\beta(x)| \leq N(1+|x|^p) \qquad \forall x \in \erre,
  \]
  where the constant $N$ does not depend on $\beta$.
\end{lemma}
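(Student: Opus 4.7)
The plan is a direct computation using the compact support and unit mass of the mollifier $\zeta_\beta$. Writing out the convolution,
\[
f_\beta(x) = \int_\erre f(x-y) \zeta_\beta(y)\,dy,
\]
I would first observe that since $\zeta_\beta$ is supported in $[-\beta,\beta]$, only values $|y|\le\beta\le 1$ contribute to the integral. For such $y$ one has $|x-y|\le |x|+1$, so the growth hypothesis on $f$ gives a pointwise bound $|f(x-y)| \lesssim 1+|x-y|^p \lesssim 1+(|x|+1)^p$ that is \emph{independent of $y$ and of $\beta$}.

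The second and final step is to integrate against $\zeta_\beta$, using positivity and $\int_\erre \zeta_\beta = 1$, to get
\[
|f_\beta(x)| \le \int_\erre |f(x-y)|\zeta_\beta(y)\,dy \lesssim 1 + (|x|+1)^p.
\]
Then an elementary inequality $(|x|+1)^p \le 2^{p-1}(|x|^p+1)$ (valid for $p\ge 1$; trivial for $0<p<1$) absorbs the constant shift into a single constant $N$ depending only on $p$ and on the implicit constant in $|f(x)|\lesssim 1+|x|^p$, yielding $|f_\beta(x)| \le N(1+|x|^p)$ with $N$ independent of $\beta$.

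There is really no substantive obstacle: the $\beta$-independence of $N$ hinges entirely on the two scaling-invariant features of $\zeta_\beta$, namely $\int \zeta_\beta = 1$ and $\operatorname{supp}\zeta_\beta \subset [-\beta,\beta] \subset [-1,1]$ (which is where the restriction $\beta\le 1$ enters). If anything deserves emphasis, it is precisely that the bound $|x-y| \le |x|+1$, and hence the polynomial bound on $|f(x-y)|$, is uniform in $\beta \in (0,1]$; after that the argument is a one-line estimate.
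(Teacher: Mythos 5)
Your proof is correct and follows essentially the same route as the paper: bound $|f(x-y)|$ by the polynomial growth hypothesis and integrate against $\zeta_\beta$, using $\int\zeta_\beta=1$ and the fact that $\beta\le 1$ keeps the shift $|y|$ bounded by $1$. The only cosmetic difference is that you use the support to write $|x-y|\le|x|+1$ directly, while the paper expands $|x-y|^p$ and observes $\int|y|^p\zeta_\beta(y)\,dy=\beta^p\int|y|^p\zeta(y)\,dy\le 1$; both yield the same $\beta$-independent constant.
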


\begin{proof}
  Assume that $|f(x)| \leq N_1(1+|x|^p)$. By the triangle inequality
  and the properties of $\zeta$ one has
  \begin{align*}
    |f_\beta(x)| &\leq \int_\erre |f(x-y)|\zeta_\beta(y)\,dy
    \leq N_1\int_\erre \big( 1 + |x-y|^p \big)\zeta_\beta(y)\,dy\\
    &\leq N_1(1+|x|^p) + N_1\int_\erre |y|^p\zeta_\beta(y)\,dy\\
    &= N_1(1+|x|^p) + N_1\beta^p \int_\erre |y|^p\zeta(y)\,dy\\
    &\leq N_1(2+|x|^p).
    \qedhere
  \end{align*}
\end{proof}


\section{Existence and smoothness of the density}
\label{sec:exis-smooth}
In this section, we provide sufficient conditions on the data of the
problem implying that the law of $u(t,x)$ is absolutely continuous
with respect to the Lebesgue measure and that the corresponding
density is a differentiable function. More precisely, we will prove
the following result.
\begin{thm}     \label{thm:main} 
  Assume that $u_0 \in C(\overline{\co})$ and let $u \in
  \bigcap_{q\in\enne} \C_q$ be the mild solution to
  \eqref{eq:30}. Furthermore, assume that there exists $\gamma \in
  (0,2)$ such that, for all $x \in \co$, there exists a constant
  $c_x>0$ such that, for any $t \in (0,1)$,
  \begin{equation} \label{lower-s}
    g(x,t):= \int_{0}^{t} \|G_s(x,\cdot)\|^2_{L^2_Q} \, ds \geq c_x \, t^\gamma.
  \end{equation}
  \begin{itemize}
  \item[(a)] If $f\in \Cpol^1(\erre)$ is increasing, then for any
    $(t,x)\in ]0,T] \times \co$, the random variable $u(t,x)$ is
    absolutely continuous with respect to the Lebesgue measure.
  \item[(b)] Moreover, if for some integer $m \geq 2$, $f\in
    \Cpol^m(\erre)$, then the density of $u(t,x)$ belongs to
    $C^{m-2}(\erre)$.
  \end{itemize}
\end{thm}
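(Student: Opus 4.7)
The plan is to combine the standard Malliavin-calculus criteria for absolute continuity (Bouleau--Hirsch) and for smoothness of densities (negative moments of the Malliavin ``matrix'') with an approximation procedure based on \eqref{eq:reg}. Since $f_\lambda$ is globally Lipschitz with $f'_\lambda$ of polynomial growth uniformly in $\lambda$ (Lemma \ref{lm:Yop}(ii)), classical results for Lipschitz coefficients give $u_\lambda(t,x)\in\D^{1,\infty}$, and the Malliavin derivative $D_{r,\cdot}u_\lambda(t,x)$ satisfies, for $0<r<t$, the linear integral equation
\[
D_{r,z} u_\lambda(t,x) = G_{t-r}(x,z)+\int_r^t\!\!\int_\co G_{t-s}(x,y)\bigl(\eta-f'_\lambda(u_\lambda(s,y))\bigr)D_{r,z} u_\lambda(s,y)\,dy\,ds,
\]
i.e.\ it coincides pathwise with the kernel $k_\lambda(t,r;x,\cdot)$ of the evolution operator $U_\lambda(t,r)$ generated by the time-dependent potential $F_\lambda(s):=f'_\lambda(u_\lambda(s))-\eta$. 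After the trivial shift $F_\lambda\mapsto F_\lambda+\eta\geq 0$, this operator falls in the setting of Proposition \ref{prop:eto}.

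For part (a), Gronwall's lemma applied to the above integral equation together with Lemma \ref{lm:Yop}(ii), Proposition \ref{prop:1} and Cauchy--Schwarz yields uniform-in-$\lambda$ bounds on $\|u_\lambda(t,x)\|_{\D^{1,q}}$ for every $q$; the closability lemma \ref{lm:clos} then gives $u(t,x)\in\D^{1,\infty}$ and identifies $D_{r,\cdot}u(t,x)$ with the kernel $k(t,r;x,\cdot)$ of the evolution operator associated to $F(s):=f'(u(s))-\eta$. Set
\[
\mu:=\sup_{(s,y)\in\co_T}\bigl|f'(u(s,y))-\eta\bigr|,
\]
which, thanks to \eqref{unif-sup} and the polynomial growth of $f'$, is a.s.\ finite and has finite moments of every order. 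A pathwise parabolic comparison argument (multiplying by $e^{\mu(t-r)}$ turns $k$ into a supersolution of the heat equation with datum $\delta_z$) yields the kernel bound
\[
k(t,r;x,y)\geq e^{-\mu(t-r)}G_{t-r}(x,y).
\]
Restricting the integral defining the Malliavin matrix to $r\in[t-\varepsilon,t]$ with $\varepsilon:=\min(1/\mu,t,1)$ and using \eqref{lower-s} gives
\[
\|Du(t,x)\|^2_{L^2_Q}\;\geq\; e^{-2\mu\varepsilon}\,g(x,\varepsilon)\;\gtrsim\; c_x\,\mu^{-\gamma},
\]
which is a.s.\ strictly positive (settling (a) via Bouleau--Hirsch) and admits negative moments of every order.

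For part (b), the Yosida regularization alone is not enough because $f^{(n)}_\lambda$ need not be bounded, so one further mollifies and works with $f_{\lambda,\beta}:=f_\lambda\ast\zeta_\beta$, which is of class $C^\infty_b$ and whose $n$-th derivative grows like $1+|x|^{q_n}$ uniformly in $(\lambda,\beta)$ for every $n\leq m$, by Lemmas \ref{lm:Yop}(ii) and \ref{lm:0}. Iterating the chain rule of Lemma \ref{lm:chain} (and its higher-order version via Arbogast's formula) along the solution $u_{\lambda,\beta}$ of the doubly regularized equation produces linear Volterra systems for $D^n u_{\lambda,\beta}$, $1\leq n\leq m-1$, whose coefficients are controlled in every $\LL^q(H^{\otimes n})$ uniformly in $(\lambda,\beta)$. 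A double passage to the limit, first $\beta\to 0$ and then $\lambda\to 0$, invoking Lemma \ref{lm:clos} at each stage, yields $u(t,x)\in\D^{m-1,\infty}$; combined with the non-degeneracy estimate above, the classical criterion of Malliavin (see e.g.\ \cite[Prop.~2.1.5]{nualart}) then delivers a density of class $C^{m-2}(\erre)$.

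The main obstacle will be propagating the higher-order Malliavin estimates uniformly in the two regularization parameters: Arbogast's formula couples all derivatives of $f_{\lambda,\beta}$ up to order $n$ with products of lower-order Malliavin derivatives of $u_{\lambda,\beta}$, so the polynomial-growth bounds of Lemmas \ref{lm:Yop}(ii) and \ref{lm:0} must be pushed through a nested Gronwall/Volterra argument without any constant blowing up as $\lambda,\beta\to 0$. All remaining steps, including the non-degeneracy of the Malliavin matrix, essentially reduce to the parabolic comparison above and to the abstract lower bound \eqref{lower-s}.
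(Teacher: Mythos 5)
Your overall architecture (Yosida plus mollifier regularization, uniform moment bounds, the closability Lemma \ref{lm:clos}, Bouleau--Hirsch and the smooth-density criterion) matches the paper's, but your treatment of the non-degeneracy is genuinely different from Proposition \ref{prop:mm}. The paper proves the small-ball estimate $\P(\|Du(t,x)\|_H^2<\varepsilon)\lesssim\varepsilon^q$ by writing $\|Du(t,x)\|_H^2\geq\tfrac12 g(x,\delta)-I(t,x,\delta)$, bounding $\E|I(t,x,\delta)|^p\lesssim\delta^{2p}$ and choosing $c_x\delta^\gamma=4\varepsilon$; you instead prove the pathwise two-sided comparison $e^{-\mu(t-r)}G_{t-r}(x,\cdot)\leq k(t,r;x,\cdot)\leq G_{t-r}(x,\cdot)$ and, taking the window of length $\varepsilon=\min(1/\mu,t,1)$, obtain the almost sure lower bound $\|Du(t,x)\|_H^2\gtrsim c_x\min(1/\mu,t,1)^\gamma$ with $\mu$ possessing all moments. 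This is correct (the passage from $0\leq e^{-\mu(t-r)}G\leq k$ to the corresponding inequality of $L^2_Q$-norms uses that $Q$ is positivity preserving, the same hypothesis the paper needs for the upper bound), it yields negative moments of every order at once, and it is arguably a cleaner and stronger statement than the paper's.

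There is, however, a step that fails as written: ``Gronwall's lemma \dots yields uniform-in-$\lambda$ bounds on $\|u_\lambda(t,x)\|_{\D^{1,q}}$,'' and likewise the ``nested Gronwall/Volterra argument'' in part (b). Gronwall applied to the Volterra equation for $Du_\lambda(t,x)$ produces a factor $\exp\bigl(\int_0^T\|f'_\lambda(u_\lambda(s))\|_{L^\infty}\,ds\bigr)$, and since $|f'_\lambda(z)|\lesssim 1+|z|^{p}$ with $p$ arbitrary while $\sup_{t,x}|u_\lambda(t,x)|$ has only Gaussian-type tails, this exponential is not integrable once $p\geq 2$. You therefore get an a.s.\ finite pathwise bound but not the uniform $\LL^q$ bound that Lemma \ref{lm:clos} requires. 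This is exactly the obstruction that distinguishes the present non-Lipschitz setting from the classical one, and the correct mechanism is already in your first paragraph: the sign condition $f'_\lambda\geq 0$ together with Proposition \ref{prop:eto}(ii) gives the deterministic domination $0\leq k_\lambda(t,s)\leq G_{t-s}$, hence $\|Du_\lambda(t,x)\|_H\leq\|v_0(t,x)\|_H$, and for the higher-order systems one propagates the (polynomially bounded, hence $\LL^q$-controlled) source terms through the sub-probability measure $G_{t-s}(x,y)\,dy\,ds$ via Jensen's inequality, with no exponential loss. Gronwall must be replaced by this comparison throughout.

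Finally, part (b) as you state it is one order short: you only derive the Volterra systems for $D^n u_{\lambda,\beta}$ with $n\leq m-1$ and conclude $u(t,x)\in\D^{m-1,\infty}$, but the criterion of \cite[Prop.~2.1.5]{nualart} needs $\D^{k+2,\infty}$ to produce a $C^k$ density, so $\D^{m-1,\infty}$ only gives $C^{m-3}$. Since $f\in\Cpol^m(\erre)$, the induction goes up to $n=m$ --- each differentiation of the equation consumes one more derivative of $f_{\lambda\beta}$, all of which are available and polynomially bounded uniformly in $(\lambda,\beta)$ by Lemmas \ref{lm:Yop}(ii) and \ref{lm:0} --- yielding $u(t,x)\in\D^{m,\infty}$ as in Proposition \ref{prop:smoothness} and hence the claimed $C^{m-2}$ regularity of the density.
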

As we shall see below, the assumption (\ref{lower-s}) is not needed to
prove Malliavin regularity of $u(t,x)$. It is instead needed to prove
finiteness of negative moments of the Malliavin matrix (which reduces
to a real random variable in the present setting). Moreover, as
explained in Remark \ref{rmk:existence}, in order to prove the
existence of density, condition (\ref{lower-s}) can be slightly
weakened. Nevertheless, for the sake of conciseness and clarity, we
have decided to state only one condition of the term $g(x,t)$.

\begin{rmk}
  The hypothesis on the initial datum in the previous theorem may be
  relaxed to $u_0 \in \LL^q(C(\overline{\co}))$ for all $q \geq 1$,
  and $u_0 \in L^\infty(\co \to \D^{1,\infty})$. However, it does not
  seem natural to assume the initial datum to have such regularity.
\end{rmk}

Let us give some examples of domains $\co$ and covariance operators
$Q=BB^*$ satisfying condition (\ref{lower-s}) above.

\begin{example}
  Let $d=1$, $\co=(0,1)$ and $B= \mbox{Id}$. Then (\ref{lower-s})
  holds with $\gamma=\frac12$ (see e.g. \cite[(A.3)]{Bally-Pardoux}).
\end{example}
  
\begin{example}
  Let $\co=(0,\pi)^d$. Define, for any $k=(k_1,\dots,k_d)\in
  \mathbb{N}^d$,
  \[
  e_k(x):=\left(\frac 2\pi\right)^{\frac d2} \sin(k_1 x_1)\cdots
  \sin(k_d x_d), \qquad x\in \co.
  \]
  Then, it is readily checked that the family $\{e_k\}_{k\in
    \mathbb{N}^d}$ is an orthonormal basis of $L^2(\co)$ such that
  \[
  -\Delta e_k = |k|^2 \, e_k,
  \]
  where $|k|^2:=k_1^2+\cdots+k_d^2$. Set $B=(I-\Delta)^{-m}$
  for $m\geq 0$. Then, since $Q=(I-\Delta)^{-2m}$, one has
  \begin{align*}
    g(x,t) & = \int_0^t \!\! \int_{\co} G_s(x,y) [Q G_s(x,\cdot)](y)\,dy\,ds\\
    & = \int_0^t \sum_{k\in \mathbb{N}^d} (1+|k|^2)^{-m}
        \ip{G_s(x,\cdot)}{e_k}_{L^2}^2 \,ds \\
    & = \int_0^t \sum_{k\in \mathbb{N}^d} (1+|k|^2)^{-m} \, e^{-2s |k|^2} |e_k(x)|^2 \,ds \\
    & = \frac12 \sum_{k\in \mathbb{N}^d} (1+|k|^2)^{-m} \, |k|^{-2}
    \,(1-e^{-2 t |k|^2}) |e_k(x)|^2.
  \end{align*}
  Using the fact that $|e_k(x)|$ is uniformly bounded with respect to
  $k$ and $x$, one easily verifies that $m>\frac d2 -1$ implies that
  the latter series is finite.  Moreover, we have that
  \[
  1-e^{-2t |k|^2} \geq \frac{2 t |k|^2}{1+2 t |k|^2} \geq \frac{2 t
    |k|^2}{1+2 T |k|^2}.
  \]
  Hence
  \[
  g(x,t)\geq t \; \sum_{k\in \mathbb{N}^d} (1+|k|^2)^{-m} \, (1+2 T
  |k|^2)^{-1} \, |e_k(x)|^2
  \]
  and this series can be bounded from below by any of its summands,
  such as the corresponding to $k=(1,\dots,1)\in
  \mathbb{N}^d$. Therefore,
  \[
  g(x,t) \geq c_x\, t, \quad \text{with} \quad c_x:= (1+d)^{-m} \,
  (1+2 T d)^{-1} \, \left(\frac2\pi\right)^{\frac d2} \sin(x_1)\cdots
  \sin(x_d).
  \]
  Since $x\in (0,\pi)^d$, it is clear that $c_x>0$ and this implies
  that condition (\ref{lower-s}) is fulfilled with $\gamma=1$.
\end{example}

Before turning to the study of the Malliavin differentiability of the
solution to our equation, let us recall that the underlying Gaussian
space on which to perform Malliavin calculus is given by the isonormal
Gaussian process on the Hilbert space $H:=L^2([0,T] \to L^2_Q)$, which
can be naturally associated to the cylindrical Wiener process
$\bar{W}$ with covariance $Q$. With a slight (but harmless) abuse of
notation we shall write $W$ instead of $\bar{W}$ for notational
convenience.

\subsection{Malliavin differentiability} 
\label{sec:malliavin}
The purpose of this subsection is to prove regularity of the
collection of random variables $\{u(t,x)\}_{(t,x)\in\co_T}$ in the
sense of Malliavin. Proposition \ref{prop:one} concerns the
Malliavin differentiability of order one, while Proposition
\ref{prop:smoothness} treats higher-order Malliavin derivatives. As
already mentioned, the two results rely on hypotheses (a) and (b) of
Theorem \ref{thm:main}, respectively, but not on the lower bound for
the stochastic convolution.
\begin{prop}     \label{prop:one}
  Assume that
  \begin{itemize}
  \item[(i)] $u_0 \in C(\overline{\co})$;
  \item[(ii)] $f$ is increasing and belongs to $\Cpol^1(\erre)$;
  \item[(iii)] $Q$ is positivity preserving.
  \end{itemize}
  Let $u \in \bigcap_{q \geq 1}\C_q$ be the mild solution to
  \eqref{eq:30}. Then, for any $(t,x)\in \co_T$, one has $u(t,x) \in
  \D^{1,\infty}$.  Moreover, the Malliavin derivative $Du(t,x)$
  satisfies the following linear equation in $H$:
  \begin{equation}
    Du(t,x)= v_0(t,x) + \int_0^t\!\!\int_\co G_{t-s}(x,y)
    (\eta-f'(u(s,y))) D u(s,y) \,dy\,ds
    \label{eq:120}
  \end{equation}
  where $v_0(t,x):=(\tau,z) \mapsto G_{t-\tau}(x,z)\,1_{[0,t]}(\tau)$,
  and it holds that, for all $q \geq 1$,
  \begin{equation} \label{sup:deriv}
    \sup_{(t,x) \in \co_T} \E\|Du(t,x)\|_H^q < \infty.
  \end{equation}
\end{prop}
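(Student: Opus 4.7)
The plan is to derive the result by first working at the level of the Yosida-regularized equation \eqref{eq:reg}, exploiting that $f_\lambda$ is globally Lipschitz and, by Lemma \ref{lm:Yop}, of class $\Cpol^1$ with derivative satisfying a polynomial bound uniform in $\lambda \leq 1$. For such $f_\lambda$ a Picard iteration in the random field formulation of Proposition \ref{prop:rf} applies: the Gaussian stochastic convolution is Malliavin differentiable with derivative $v_0(t,x)$, the induction step rests on the chain rule of Lemma \ref{lm:chain} (which applies since $f_\lambda$ is increasing and in $\Cpol^1$), and closability (Lemma \ref{lm:clos}) lets one pass to the Picard limit. This yields $u_\lambda(t,x) \in \D^{1,\infty}$ and the identity
\begin{equation}\label{eq:lam-der}
Du_\lambda(t,x) = v_0(t,x) + \int_0^t\!\!\int_\co G_{t-s}(x,y)\bigl(\eta - f'_\lambda(u_\lambda(s,y))\bigr) Du_\lambda(s,y)\,dy\,ds \quad \text{in } H.
\end{equation}

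The core step is to prove the pathwise bound
\begin{equation}\label{eq:ptws-goal}
\|Du_\lambda(t,x)\|_H \leq e^{\eta T}\, \|v_0(t,x)\|_H \qquad \P\text{-a.s.},
\end{equation}
uniformly in $\lambda \leq 1$ and $(t,x) \in \co_T$. I would freeze $\omega$ and identify the coordinate $w(t,x;s,z) := D_{s,z} u_\lambda(t,x)$, for $s \leq t$, as the mild solution on $(s,T]\times\co$ with Dirac initial datum $\delta_z$ of
\[
\partial_t w - \Delta w + \bigl(F_\lambda - \eta\bigr) w = 0, \qquad F_\lambda(r,y) := f'_\lambda(u_\lambda(r,y)).
\]
Since $f_\lambda$ is increasing, $F_\lambda \geq 0$ pathwise; since $u_\lambda \in C([0,T] \times \overline{\co})$ and $f'_\lambda$ has polynomial growth, $F_\lambda$ is pathwise bounded in $L^\infty$. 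Writing $\tilde w := e^{-\eta(t-s)} w$ reduces the problem to applying the time-dependent evolution operator $U_\lambda(t,s)$ of Proposition \ref{prop:eto} to $\delta_z$, so that $\tilde w(t,\cdot;s,z) = k_\lambda(t,s;\cdot,z)$, the kernel of $U_\lambda(t,s)$. Parts (i)--(ii) of Proposition \ref{prop:eto} give $0 \leq k_\lambda(t,s;x,z) \leq G_{t-s}(x,z)$, whence $0 \leq w(t,x;s,z) \leq e^{\eta T} G_{t-s}(x,z)$. Because $Q$ is positivity preserving and symmetric, the $L^2_Q$-norm is monotone on nonnegative functions; integrating $\|w(t,x;s,\cdot)\|_{L^2_Q}^2$ over $s \in (0,t)$ then produces \eqref{eq:ptws-goal}. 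Uniform finiteness of $\sup_{(t,x)} \|v_0(t,x)\|_H$ follows from It\^o's isometry applied to the stochastic convolution together with Hypothesis \ref{hyp:csc}.

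Since $u_\lambda(t,x) \to u(t,x)$ in every $L^q(\Omega)$ by Proposition \ref{prop:1}, and $\sup_\lambda \|u_\lambda(t,x)\|_{\D^{1,q}}$ is finite by the previous steps, Lemma \ref{lm:clos} yields $u(t,x) \in \D^{1,\infty}$, the weak convergence $Du_\lambda(t,x) \rightharpoonup Du(t,x)$ in $L^q(\Omega;H)$, and, via weak lower semicontinuity, the bound \eqref{sup:deriv}. To pass to the limit in \eqref{eq:lam-der} I would combine strong convergence $f'_\lambda(u_\lambda) \to f'(u)$ in $L^q(\Omega\times\co_T)$, obtained from Lemma \ref{lm:Yop}(ii)--(iii), the $\C_r$-convergence of Proposition \ref{prop:1}, and dominated convergence, with the weak convergence of $Du_\lambda$ to get weak convergence (strong times weak) of the product $(\eta - f'_\lambda(u_\lambda)) Du_\lambda$ to $(\eta - f'(u)) Du$; convolution against $G_{t-s}(x,y)$ is a continuous linear operation and hence commutes with the weak limit, delivering \eqref{eq:120}. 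The main obstacle is the identification performed in the middle paragraph: one must interpret the ``pointwise-in-$(s,z)$'' representation of the $H$-valued Malliavin derivative and justify that the comparison of Proposition \ref{prop:eto} survives passage to the singular Dirac initial datum $\delta_z$, for instance by approximating $\delta_z$ with nonnegative $L^2$ mollifiers. Once that is done, positivity preservation by $Q$ and the kernel structure take care of the rest.
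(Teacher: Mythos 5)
Your proposal follows the paper's strategy almost exactly: Yosida regularization, the pathwise comparison of the linearized evolution operator with the heat semigroup via Proposition \ref{prop:eto}, positivity preservation of $Q$ to control the $L^2_Q$-norm, identification of $\|v_0(t,x)\|_H^2$ with the second moment of the stochastic convolution so that Hypothesis \ref{hyp:csc} gives the uniform bound, and closure via Lemma \ref{lm:clos}. The one point where you diverge --- and which you yourself flag as the main obstacle --- is the representation of $D_{s,z}u_\lambda(t,x)$ as the solution of the linearized PDE started from the Dirac datum $\delta_z$ at time $s$. The paper sidesteps this entirely: it fixes an orthonormal basis $\{h^k\}$ of $H$ and observes that $\varphi^k_\lambda(t,\cdot):=\ip{Du_\lambda(t,\cdot)}{h^k}_H$ solves the same linear evolution equation but with \emph{zero} initial datum and source term $\Phi^k=Qh^k$, so that $\varphi^k_\lambda(t)=\int_0^t U_\lambda(t,s)\Phi^k(s)\,ds$ is classically well posed; Parseval then yields $\|Du_\lambda(t,x)\|_H^2=\int_0^t\|k_\lambda(t,s;x,\cdot)\|^2_{L^2_Q}\,ds$ directly, after which $0\le k_\lambda\le G$ and the positivity of $Q$ (exactly the monotonicity of $\ip{Q\cdot}{\cdot}_2$ on ordered nonnegative functions that you invoke) finish the estimate. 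This removes any need to make sense of the Malliavin derivative pointwise in $(s,z)$ or to mollify $\delta_z$, so if you adopt the basis projection your ``main obstacle'' disappears. Two smaller differences: the paper obtains \eqref{eq:120} not by a strong-times-weak passage to the limit in the $\lambda$-equation but by differentiating the limit equation for $u$ directly and invoking the chain rule of Lemma \ref{lm:chain} (which is precisely why that lemma is proved for increasing $\Cpol^1$ functions); and it proves \eqref{sup:deriv} by re-running the evolution-operator argument with $F=f'(u)$, which is pathwise bounded by continuity of $u$, rather than by weak lower semicontinuity of the norm --- your route for that last point is equally valid and slightly shorter.
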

\begin{proof}
  Since $f_\lambda$ is Lipschitz continuous and of class $C^1$, slight
  modifications of the ``classical'' results
  (cf. e.g. \cite{nualart-LNM}) imply that, for any $(t,x) \in \co_T$,
  $u_\lambda(t,x)$ belongs to $\mathbb{D}^{1,\infty}$ and satisfies
  the following linear deterministic integral equation with random
  coefficients:
  \[
    Du_{\lambda}(t,x)= v_0(t,x) + \int_0^t\!\!\int_{\co} G_{t-s}(x,y)
    (\eta-f'_{\lambda}(u_{\lambda}(s,y))) D u_{\lambda}(s,y) \,dy\,ds.
  \]
  Recall that, by Proposition~\ref{prop:1}, one has, for any $q \geq 1$,
  \[
  \E| u_\lambda(t,x) - u(t,x)|^q \to 0
  \]
  as $\lambda \to 0$, for all $(t,x) \in \mathcal{O}_T$. Therefore, by
  Lemma \ref{lm:clos}, in order to conclude that
  $u(t,x)$ belongs to $\D^{1,\infty}$ for all $(t,x) \in
  \mathcal{O}_T$, it suffices to show that for all $q\geq
  1$ and $(t,x)\in \co_T$, one has
  \begin{equation}      \label{eq:sup}
    \sup_{\lambda>0} \E \|D u_\lambda(t,x)\|^q_{H} <\infty.
  \end{equation}
  Let $\{h^k\}_{k\in\enne}$ be an orthonormal basis of $H$, and set
  \[
  \varphi_\lambda^k(t,x) := \ip{Du_{\lambda}(t,x)}{h^k}_H,
  \qquad (t,x) \in \co_T.
  \]
  Then $\varphi_{\lambda}^k(t):=\varphi_{\lambda}^k(t,\cdot)$, $0 \leq
  t \leq T$, satisfies the deterministic evolution equation with
  random coefficients
  \begin{equation}     \label{eq:251}
    \frac{d}{dt}\varphi_\lambda^k(t) - \Delta\varphi_\lambda^k(t)
    + F_\lambda(t) \varphi_\lambda^k(t) = \Phi^k(t),
    \qquad \varphi_{\lambda}^k(0)=0,
  \end{equation}
  where $F_\lambda(t):=f_\lambda'(u_\lambda (t,\cdot))-\eta$ and
  $\Phi^k(t):=Q h^k(t)$.  In fact, one has
  \begin{align*}
  \ip{v_0(t,x)}{h^k}_H &= 
  \int_0^t \ip{G_{t-s}(x,\cdot)}{h^k(s)}_{L^2_Q}\,ds =
  \int_0^t\!\!\int_\co G_{t-s}(x,y)[Qh^k(s)](y)\,dy\,ds\\
  &= \Big[\int_0^t S(t-s)Qh^k(s)\,ds\Big](x).
  \end{align*}
  From now we assume, without loss of generality, that $\eta=0$ (if
  not, it is enough to write the corresponding equation for $t \mapsto
  e^{-\eta t} \varphi_\lambda(t)$, multiplying by $e^{-\eta t}$ the
  term $\Phi^k(t)$).

  Fix $\omega \in \Omega$, and let $(s,t) \mapsto U_\lambda(t,s)$, $s
  \leq t$, denote the family of evolution operators generated by the
  time-dependent linear operator $\Delta - F_\lambda(t)$. Then we can
  write
  \[
  \varphi^k_\lambda(t) = \int_0^t U_\lambda(t,s)\Phi^k(s)\,ds, 
  \]
  thus also, denoting the integral kernel of $U_\lambda(t,s)$ (that
  exists by Proposition \ref{prop:eto}(iv)), by
  $k_\lambda(t,s;\cdot,\cdot)$,
  \[
  \varphi^k_\lambda(t,x) = \int_0^t\!\!\int_\co
  k_\lambda(t,s;x,y)\Phi^k(s,y)\,dy\,ds.
  \]
  This yields
  \begin{align*}
    \|Du_\lambda(t,x)\|_H^2 &= \sum_{k\in\enne} |\varphi_\lambda^k(t,x)|^2 =
    \sum_{k\in\enne} \Big| \int_0^t\!\!\int_\co
    k_\lambda(t,s;x,y)\Phi^k(s,y)\,dy\,ds \Big|^2\\
    &= \sum_{k\in\enne} \big|
       \ip{k_\lambda(t,\cdot;x,\cdot)1_{[0,t]}(\cdot)}
          {h^k}_{L^2([0,T]\to L^2_Q)} \big|^2\\
    &= \int_0^t \| k_\lambda(t,s;x,\cdot) \|^2_{L^2_Q}\,ds.
  \end{align*}
  Note that Proposition \ref{prop:eto}(ii) implies
  $k_\lambda(t,s) \leq G_{t-s}$ pointwise for all $0 \leq s < t \leq
  T$. Using that $Q$ is positivity preserving, we are left with
  \begin{equation}\label{eq:345}
  \|Du_\lambda(t,x)\|_H^2 \leq
  \int_0^t \|G_{t-s}(x,\cdot)\|^2_{L^2_Q} ds = \| v_0(t,x) \|^2_H.
  \end{equation}
  Let us now show that $\| v_0(t,x) \|^2_H$ is uniformly bounded over
  $t$ and $x$. In fact, one has
  \begin{align*}
    \| v_0(t,x) \|^2_H &= \int_0^t \| G_{t-\tau}(x,\cdot)
    \|^2_{L^2_Q}\,d\tau
    = \int_0^t \| B^* G_{t-\tau}(x,\cdot) \|^2_{2}\,d\tau\\
    &= \int_0^t \sum_{k\in\enne}
    \ip{G_{t-\tau}(x,\cdot)}{Be^k}^2_{2}\,d\tau = \int_0^t \sum_{k
      \geq 1} \Big(
    \int_\co G_{t-\tau}(x,y)[Be^k](y)\,dy \Big)^2\,d\tau\\
    &= \sum_{k\in\enne} \int_0^t \big[S(t-\tau)Be^k\big](x)^2\,d\tau,
  \end{align*}
  where $\{e^k\}_{k\in\enne}$ is an orthonormal basis of $L^2$. The
  identities
  \[
  \E|W_A(t,x)|^2 = \E\Big| \sum_{k\in\enne} \int_0^t
  \big[S(t-s)Be^k\big](x)\,dw_k(s) \Big|^2 = \sum_{k\in\enne} \int_0^t
  \big[S(t-s)Be^k\big](x)^2\,ds,
  \]
  yield $\|v_0(t,x)\|_H^2 = \E|W_A(t,x)|^2$ for all $(t,x) \in
  \co_T$. Thanks to Hypothesis \ref{hyp:csc} we infer that there
  exists a constant $N=N(q)$, independent of $\lambda$, such that
  \[
  \sup_{(t,x) \in \co_T} \E\|Du_\lambda(t,x)\|_H^q < N.
  \]
  We have thus proved that $u(t,x) \in \mathbb{D}^{1,\infty}$ for all
  $(t,x) \in \co_T$. It is therefore lawful to apply the
  Malliavin derivative to the equation satisfied by $u$, obtaining
  \[
    Du(t,x)= v_0(t,x) - \int_0^t\!\!\int_\co G_{t-s}(x,y)
    Df(u(s,y)\,dy\,ds.
  \]
  Then, appealing to the chain rule proved in Lemma
  \ref{lm:chain}, we obtain that the Malliavin derivative
  $Du(t,x)$ satisfies equation (\ref{eq:120}).

  In order to conclude, we only have to show that estimate
  (\ref{sup:deriv}) holds true. The argument is essentially the same
  as above, hence it is only sketched. Still assuming $\eta=0$ without
  loss of generality, setting $F(t,x):=f'(u(t,x))$, one has that
  $\varphi^k(t,x):=\ip{Du(t,x)}{h^k}_H$ satisfies the linear
  deterministic evolution equation with random coefficients
  \[
  \frac{d}{dt}\varphi^k(t) - \Delta\varphi^k(t) + F(t) \varphi^k(t) =
  \Phi^k(t), \qquad \varphi^k(0)=0.
  \]
  Let $\Omega' \subset \Omega$ with $\P(\Omega')=1$ such that $(t,x)
  \mapsto u(t,x,\omega) \in C([0,T] \times \bar{\co})$ for all $\omega
  \in \Omega'$. Fix $\omega \in \Omega'$. Then $(t,x) \mapsto F(t,x)$
  is positive and continuous, hence bounded on the compact set $[0,T]
  \times \bar{\co}$. One can then construct the evolution operator
  associated to $\Delta-F$, and proceeding exactly as above one
  arrives at
  \[
  \sup_{(t,x) \in \co_T} \E\|Du(t,x)\|_H^q < \infty,
  \]
  so that the proof is complete.
\end{proof}

\begin{rmk}
  Condition (iii) in Proposition \ref{prop:one} above does not need to
  be considered an important restriction.  Indeed, this condition is
  satisfied in the spatially homogeneous counterpart when dealing with
  existence and smoothness of the density for stochastic heat and wave
  equations in $\erre^d$ (see e.g. \cite{Nualart-QuerPOTA}).
\end{rmk}

\begin{prop}    \label{prop:smoothness}
  Let $m \in \enne$, $m \geq 2$. Assume that
  \begin{itemize}
  \item[(i)] $u_0 \in C(\overline{\co})$;
  \item[(ii)] $f$ is increasing and belongs to $\Cpol^m(\erre)$;
  \item[(iii)] $Q$ is positivity preserving.
  \end{itemize}
  Let $u \in \cap_{q\in\enne} \C_q$ be the unique mild solution to
  \eqref{eq:30}.  Then, for any $(t,x)\in \co_T$, one has $u(t,x) \in
  \D^{m,\infty}$ and
  \[
  \sup_{(t,x)\in\co_T} \|u(t,x)\|_{\D^{m,q}} < \infty \qquad \forall q
  \geq 1.
  \]
\end{prop}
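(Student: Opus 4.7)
The plan is to argue by induction on $m$, the base case $m=1$ being Proposition \ref{prop:one}. The obstruction to applying the classical higher-order Malliavin chain rule directly is that, although $f_\lambda \in \Cpol^m(\erre)$ by Lemma \ref{lm:Yop}, its derivatives are not bounded. To bypass this I would introduce a second, smoothing regularization by convolution with the mollifier of Lemma \ref{lm:0}, setting $f_{\lambda,\beta} := f_\lambda \ast \zeta_\beta$. This function is increasing, of class $C^\infty(\erre)$, and, by Lemma \ref{lm:0} applied to each derivative $f_\lambda^{(n)}$ together with Lemma \ref{lm:Yop}(ii), the growth $|f^{(n)}_{\lambda,\beta}(x)| \lesssim 1 + |x|^{q_n}$ holds uniformly in $\lambda,\beta \leq 1$. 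Let $u_{\lambda,\beta}$ denote the mild solution to \eqref{eq:30} with $f$ replaced by $f_{\lambda,\beta}$; the arguments underlying Proposition \ref{prop:1} give $u_{\lambda,\beta} \to u_\lambda$ in $\C_q$ for every $q \geq 1$ as $\beta \to 0$.

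Since $f_{\lambda,\beta}$ is Lipschitz continuous and $C^\infty$ with all derivatives of polynomial growth, a Picard iteration combined with iterated applications of the chain rule of Lemma \ref{lm:chain} (together with Faà di Bruno's formula for $D^k f_{\lambda,\beta}(u_{\lambda,\beta})$) shows that $u_{\lambda,\beta}(t,x) \in \D^{m,\infty}$ and that $D^k u_{\lambda,\beta}(t,x)$ satisfies an $H^{\otimes k}$-valued linear equation of the form
\[
D^k u_{\lambda,\beta}(t,x) + \int_0^t\!\!\int_\co G_{t-s}(x,y)\bigl(f'_{\lambda,\beta}(u_{\lambda,\beta}(s,y)) - \eta\bigr)\, D^k u_{\lambda,\beta}(s,y)\, dy\,ds = \Psi^k_{\lambda,\beta}(t,x),
\]
where $\Psi^1_{\lambda,\beta}(t,x) = v_0(t,x)$ and, for $k \geq 2$, $\Psi^k_{\lambda,\beta}$ is a finite sum of terms involving $f^{(j)}_{\lambda,\beta}(u_{\lambda,\beta})$ with $2 \leq j \leq k$ tensored with products of Malliavin derivatives $D^{k_1} u_{\lambda,\beta}, \ldots, D^{k_r} u_{\lambda,\beta}$ of strictly lower order.

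The bulk of the work is in establishing the uniform bound
\[
\sup_{\lambda,\beta \leq 1}\; \sup_{(t,x) \in \co_T}\; \E \|D^k u_{\lambda,\beta}(t,x)\|_{H^{\otimes k}}^q < \infty, \qquad k = 1, \ldots, m,\ q \geq 1,
\]
which I would prove by induction on $k$. Solving the displayed linear equation by means of the time-dependent evolution operator $U_{\lambda,\beta}(t,s)$ generated by $\Delta - f'_{\lambda,\beta}(u_{\lambda,\beta}(t,\cdot)) + \eta$, whose integral kernel is dominated pointwise by $e^{\eta(t-s)} G_{t-s}(x,y)$ thanks to Proposition \ref{prop:eto}(i)--(iv), one represents $D^k u_{\lambda,\beta}(t,x)$ as the action of that kernel on $\Psi^k_{\lambda,\beta}$. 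For $k=1$ the argument reproduces the one in Proposition \ref{prop:one}, using the positivity preservation of $Q$. For $k \geq 2$, the polynomial growth of $f^{(j)}_{\lambda,\beta}$ uniform in $\lambda,\beta \leq 1$, the $\C_q$-bounds on $u_{\lambda,\beta}$, and the inductive hypothesis on $D^j u_{\lambda,\beta}$ for $j < k$, combined with Hölder's inequality, control the $\LL^q(H^{\otimes k})$-norm of $\Psi^k_{\lambda,\beta}$, and the kernel domination then transfers this control to $D^k u_{\lambda,\beta}$.

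Once the uniform bound is established, the conclusion follows from a double application of the closability Lemma \ref{lm:clos}: first, the convergence $u_{\lambda,\beta} \to u_\lambda$ in every $\LL^q$ as $\beta \to 0$ together with the uniform bound in $\beta$ yields $u_\lambda(t,x) \in \D^{m,\infty}$ with bounds uniform in $\lambda \leq 1$; second, the convergence $u_\lambda \to u$ in every $\LL^q$ from Proposition \ref{prop:1} yields $u(t,x) \in \D^{m,\infty}$ and the stated supremum bound. The main technical obstacle is the bookkeeping required to control $\Psi^k_{\lambda,\beta}$ uniformly in both regularization parameters: this uniformity rests crucially on the polynomial-growth estimates of Lemmas \ref{lm:Yop} and \ref{lm:0} and on the domination $U_{\lambda,\beta} \leq e^{\eta \cdot} S$ provided by Proposition \ref{prop:eto}(ii), since the more refined covariance identity available in the $k=1$ case of Proposition \ref{prop:one} has no evident analogue at higher orders.
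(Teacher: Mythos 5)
Your proposal is correct and follows essentially the same route as the paper: double regularization $f_{\lambda\beta}=f_\lambda\ast\zeta_\beta$ with growth bounds uniform in $\lambda,\beta\le 1$ from Lemmas \ref{lm:Yop} and \ref{lm:0}, convergence $u_{\lambda\beta}\to u_\lambda$ in $\C_q$, an induction on the order of the Malliavin derivative using the evolution-operator kernel domination of Proposition \ref{prop:eto}, and a double passage to the limit via Lemma \ref{lm:clos}. The only cosmetic difference is that you absorb $\eta$ into an exponential factor on the kernel where the paper reduces to $\eta=0$ by rescaling; these are equivalent.
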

\begin{proof}
  Let $p \in \enne $ be such that $|f(x)+ |f'(x)| + \cdots +
  |f^{(m)}(x)| \lesssim 1+|x|^p$ for all $r \in \erre$. By Proposition
  \ref{prop:one}, we have that $u(t,x) \in \D^{1,\infty}$ for all
  $(t,x)\in\co_T$. Let us consider the regularized equation
  (\ref{eq:reg}): since $f_\lambda$ needs not have bounded derivatives
  of order two and higher, we cannot apply ``classical'' results
  (cf. e.g. \cite{nualart-LNM}) to deduce that, for any $(t,x) \in
  \co_T$, $u_\lambda(t,x)$ belongs to $\D^{m,\infty}$. For
  this reason, we introduce a further regularization: let
  $\{\zeta_\beta\}_\beta$ be a family of mollifiers as in Lemma
  \ref{lm:0} above. Note that
  $f^{(n)}_{\lambda\beta} = f'_\lambda \ast \zeta^{(n-1)}_\beta$ for
  all $n \geq 1$, hence $f_{\lambda\beta}$ has bounded derivatives of
  every order. Let $u_{\lambda \beta}$ be the unique mild solution in
  $\cap_{q\in\enne} \C_q$ to the equation
  \begin{equation}     \label{eq:21}
    du_{\lambda\beta}(t) - \Delta u_{\lambda\beta}(t)\,dt 
    + f_{\lambda\beta} (u_{\lambda\beta} (t))\,dt =
    \eta u_{\lambda\beta}(t)\,dt + B\,dW(t),
    \qquad u_{\lambda\beta}(0)=u_0.
  \end{equation}
  We split the rest of the proof in three steps: first we show that,
  for any $(t,x) \in \co_T$, one has $u_{\lambda\beta}(t,x) \to
  u(t,x)$ in $\LL^q$ as $\beta \to 0$. Then we obtain the uniform bound
    \begin{equation}     \label{eq:terna}
    \sup_{(t,x)\in\co_T} \E\big\|D^n u_{\lambda\beta}(t,x)\big\|_{H^{\otimes n}}
    < N,
    \end{equation}
    where $N$ is a constant independent of $\lambda$ and
    $\beta$. Finally we pass to the limit as $\beta \to 0$ and
    $\lambda \to 0$.
\smallskip\par\noindent
  \textsl{Step 1.} We assume again, without loss of generality, that
  $\eta=0$. It is easily seen that it holds
  \[
  u_{\lambda\beta}(t) - u_\lambda(t) = \int_0^t S(t-s)\big(
  f_\lambda(u_\lambda(s)) -
  f_{\lambda\beta}(u_{\lambda\beta}(s))\big)\,ds,
  \]
  hence, recalling that $S(t)$ is contracting in $L^\infty(\co)$ and
  denoting the norm of this space by $\|\cdot\|$,
  \[
  \big\|u_{\lambda\beta}(t) - u_\lambda(t)\big\| \leq \int_0^t \big\|
  f_\lambda(u_\lambda(s)) -
  f_{\lambda\beta}(u_{\lambda\beta}(s))\big\|\,ds.
  \]
  This yields, by the triangle inequality,
  \begin{align*}
  \big\|u_{\lambda\beta}(t) - u_\lambda(t)\big\| &\leq \int_0^t \big\|
  f_{\lambda\beta}(u_{\lambda\beta}(s)) - f_{\lambda\beta}(u_\lambda(s))
  \big\|\,ds + \int_0^t \big\| f_{\lambda\beta}(u_\lambda(s))
  - f_\lambda(u_\lambda(s))\big\|\,ds\\
  &\leq \frac{1}{\lambda} \int_0^t \big\|
        u_{\lambda\beta}(s) - u_\lambda(s) \big\|\,ds + I_\beta,
  \end{align*}
  where
  \[
  I_\beta := \int_0^T \big\| f_{\lambda\beta}(u_\lambda(s)) -
  f_\lambda(u_\lambda(s))\big\|\,ds.
  \]
  By Gronwall's inequality and some obvious manipulations, one arrives at
  \[
  \E\sup_{t \leq T} \big\|u_{\lambda\beta}(t) - u_\lambda(t)\big\|^q
  \leq e^{qT/\lambda} \E I^q_\beta.
  \]
  Let us show that $\E I^q_\beta \to 0$ as $\beta \to 0$: since
  $f_\lambda$ is continuous, $f_{\lambda\beta}$ converges uniformly to
  $f_\lambda$ as $\beta \to 0$. Therefore, as $u_\lambda(s) \in
  C(\overline{\co})$ $\P$-a.s., we also have that the integrand in the
  definition of $I_\beta$ converges to zero $\P$-a.s. as $\beta \to
  0$. Taking into account that
  \[
  \big\| f_{\lambda\beta}(u_\lambda(s)) - f_\lambda(u_\lambda(s))\big\|^q
  \lesssim_\lambda 1 + \|u_\lambda(s)\|^q,
  \]
  and that $\E\int_0^T \|u_\lambda(s)\|^q \, ds <\infty$, we get, by the
  dominated convergence theorem, that $\E I^q_\beta \to 0$, hence also
  \[
  \E\sup_{t \leq T} \big\| u_{\lambda\beta}(t) - u_\lambda(t)
  \big\|^q_{C(\overline{\co})} \to 0
  \]
  as $\beta \to 0$, for any $q \geq 1$.

\smallskip

  \noindent
  \textsl{Step 2.} For $n=1$ it is easily seen that (\ref{eq:terna})
  holds true, simply by the previous proposition, noting that
  $f'_{\lambda\beta}=f'_\lambda \ast \zeta_\beta \geq 0$. For the sake
  of clarity, let us explicitly show, in the case $n=2$, how estimate
  \eqref{eq:terna} is implied by the corresponding one with $n=1$. Then the
  general induction step will be clear. As before, we shall assume,
  without loss of generality, that $\eta=0$.
  Since, as already observed before, $f_{\lambda\beta}$ has bounded
  derivatives of every order, we infer that $u_{\lambda \beta}(t,x)\in
  \D^{2,\infty}$, the iterated Malliavin derivative $D^2
  u_{\lambda\beta}(t,x)$ takes values in $H^{\otimes 2}$ and satisfies
  \begin{align*}
    D^2 u_{\lambda \beta}(t,x) &+ 
    \int_0^t\!\!\int_\co G_{t-s}(x,y) f''_{\lambda \beta}(u_{\lambda \beta}(s,y))
    (Du_{\lambda \beta}(s,y))^{\otimes 2}\,dy\,ds \\
    &+ \int_0^t\!\!\int_\co G_{t-s}(x,y)
       f'_{\lambda\beta}(u_{\lambda \beta}(s,y))) D^2 u_{\lambda \beta}(s,y)
       \,dy\,ds = 0.
  \end{align*}
  Let $\{h^k\}_{k \in \enne}$ be an orthonormal basis of $H^{\otimes 2}$ and set
  \[
  \varphi_{\lambda \beta}^k(t,x) := 
  \ip{D^2 u_{\lambda \beta}(t,x)}{h^k}_{H^{\otimes 2}}, \qquad k \in \enne.
  \]
  Then $\varphi_{\lambda\beta}^k(t):=\varphi_{\lambda\beta}^k(t,\cdot)$
  satisfies the following linear deterministic evolution equation with
  random coefficients
  \begin{equation}        \label{eq:25}
    \frac{d}{dt}\varphi_{\lambda\beta}^k(t)
    - \Delta \varphi_{\lambda\beta}^k(t)
    + F_{\lambda\beta}(t)\varphi_{\lambda\beta}^k(t)
    = \Phi^k_{\lambda\beta}(t), \qquad \varphi_{\lambda\beta}^k(0)=0,
  \end{equation}
  where
  \[
  F_{\lambda\beta}(t) := f'_{\lambda\beta}(u_{\lambda\beta}(t,\cdot)),
  \qquad \Phi^k_{\lambda\beta}(t) :=
  f''_{\lambda\beta}(u_{\lambda\beta}(t,\cdot))
  \ip{(Du_{\lambda\beta}(t,\cdot))^{\otimes 2}}{h^k}_{H^{\otimes 2}}.
  \]
  Then we have
  \[
  \varphi_{\lambda\beta}^k(t) = \int_0^t U_{\lambda\beta}(t,s)
  \Phi^k_{\lambda\beta}(s)\, ds,
  \]
  hence also, denoting the kernel of $U_{\lambda\beta}(t,s)$ by
  $k(t,s;\cdot,\cdot)$,
  \begin{align*}
    |\varphi_{\lambda\beta}^k(t,x)|^2 &= \Big| \int_0^t\!\!\int_\co
    k_{\lambda\beta}(t,s;x,y) \Phi^k_{\lambda\beta}(s,y)\,dy\,ds \Big|^2\\
    &\leq \Big( \int_0^t\!\!\int_\co
    k_{\lambda\beta}(t,s;x,y) |\Phi^k_{\lambda\beta}(s,y)|\,dy\,ds \Big)^2\\
    &\leq \Big( \int_0^t\!\!\int_\co
    G_{t-s}(x,y) |\Phi^k_{\lambda\beta}(s,y)|\,dy\,ds \Big)^2\\
    &\lesssim_T \int_0^t\!\!\int_\co
    G_{t-s}(x,y) |\Phi^k_{\lambda\beta}(s,y)|^2 \,dy\,ds,
  \end{align*}
  where we have used the estimate $k_{\lambda\beta} \leq G$ in the
  first inequality, and Cauchy-Schwarz' inequality in the third
  inequality, recalling that $S(t)$ is contracting in $L^\infty$ (cf. (\ref{eq:101})).
 
  Summing over $k$, Tonelli's theorem yields
  \begin{align*}
    \|D^2u_{\lambda\beta}(t,x)\|_{H^{\otimes 2}}^2 &=
    \sum_{k\in\enne} |\varphi^k_{\lambda\beta}(t,x)|^2 
    \lesssim_T \int_0^t\!\!\int_\co G_{t-s}(x,y)
    \sum_{k\in\enne} |\Phi^k_{\lambda\beta}(s,y)|^2 \,dy\,ds\\ 
    &= \int_0^t\!\!\int_\co G_{t-s}(x,y)
       \|\Phi_{\lambda\beta}(s,y)\|^2_{H^{\otimes 2}} \,dy\,ds.
  \end{align*}
  where
  \[
  \Phi_{\lambda\beta}(t,x) :=
  f''_{\lambda\beta}(u_{\lambda\beta}(t,x))
  (Du_{\lambda\beta}(t,x))^{\otimes 2}.
  \]
  H\"older's inequality and Tonelli's theorem then imply
  \[
  \E\|D^2u_{\lambda\beta}(t,x)\|_{H^{\otimes 2}}^q \lesssim_T
  \int_0^t\!\!\int_\co G_{t-s}(x,y)
       \E\|\Phi_{\lambda\beta}(s,y)\|^q_{H^{\otimes 2}} \,dy\,ds,
  \]
  that is,
  \[
  \sup_{(t,x)\in\co_T} \E\|D^2u_{\lambda\beta}(t,x)\|_{H^{\otimes 2}}^q
  \lesssim_T
  \sup_{(t,x)\in\co_T} \E\|\Phi_{\lambda\beta}(t,x)\|^q_{H^{\otimes 2}}.
  \]
  Let us show that the right-hand side is finite: by Cauchy-Schwarz'
  inequality, we have
  \[
  \E \|\Phi_{\lambda\beta}(t,x)\|^q_{H^{\otimes 2}} \lesssim 
  \big( \E|f''_{\lambda\beta}(u_{\lambda\beta}(t,x))|^{2q}\big)^{1/2}
  \big( \E\|Du_{\lambda\beta}(t,x)\|^{4q}_H \big)^{1/2}.
  \]
  Assume, without loss of generality, $\lambda \leq 1$, $\beta \leq
  1$. Since $f''_{\lambda\beta}=f''_\lambda\ast\zeta_\beta$, and, by
  Lemma \ref{lm:Yop}, there exists $\sigma \in \enne$ such that
  $|f''_\lambda(x)| \lesssim 1+|x|^\sigma$, by Lemma \ref{lm:0} we
  also have $|f''_{\lambda\beta}(x)| \lesssim 1+|x|^\sigma$. Therefore
  \[
  \E \|\Phi_{\lambda \beta}(t,x)\|^q_{H^{\otimes 2}} \lesssim 
  \big( 1+\E|u_{\lambda \beta}(t,x)|^{2q\sigma} \big)^{1/2}
  \big( \E\|Du_{\lambda\beta}(t,x)\|^{4q}_H \big)^{1/2},
  \]
  where both terms on the right hand side are uniformly bounded over
  $t$, $x$, $\lambda$, and $\beta$ by results already proved; in fact, as for $u_\lambda$ itself, 
  the boundedness of the first term on the right-hand side above 
  follows from Proposition 6.2.2 in \cite{cerrai-libro}, since $f_{\lambda \beta}$ is also monotone. The
  claim is then verified for $n=2$.

  The general case is proved by induction in a completely similar
  way. In particular, assume that, given $3 \leq n < m$, one has the
  uniform bound
  \[
  \sup_{(t,x)\in\co_T} \|u_{\lambda\beta}(t,x)\|_{\D^{n-1,q}} < N,
  \]
  with $N$ independent of $\lambda$ and $\beta$.  Let
  $\{h^k\}_{k\in\enne}$ be an orthonormal basis of $H^{\otimes n}$,
  and set
  \[
  \varphi_{\lambda \beta}^k(t,x):= 
  \ip{D^n u_{\lambda\beta}(t,x)}{h^k}_{H^{\otimes n}}.
  \]
  Then $\varphi^k(t,\cdot)$ satisfies an equation of the form
  (\ref{eq:25}), where $\Phi^k$ is a sum of finitely many terms
  depending on $u_{\lambda\beta}$ and on its Malliavin derivatives of
  order not greater than $n-1$, whence
  \[
  \sup_{(t,x)\in\co_T} \E\|\Phi_{\lambda\beta}(t,x)\|^q_{H^{\otimes n}} < N,
  \]
  with $N$ a constant that does not depend on $\lambda$ nor on $\beta$.
  Moreover, by an argument completely analogous to one already used
  before, one shows that
  \[
  \sup_{(t,x)\in\co_T} \E\|D^n u_{\lambda\beta}(t,x)\|^q_{H^{\otimes n}}
  \lesssim 
  \sup_{(t,x)\in\co_T} \E\|\Phi_{\lambda\beta}(t,x))\|^q_{H^{\otimes n}}
  < N,
  \]
  where $N$ is the same constant of the previous inequality.
\smallskip\par\noindent

\textsl{Step 3.} Let $q>1$. By the previous steps and Lemma
\ref{lm:clos}, passing to the limit as $\beta \to 0$, we obtain
$u_\lambda(t,x) \in \D^{m,q}$ for all $(t,x) \in \co_T$, and also, by
lower semicontinuity of the norm with respect to weak convergence,
\[
\E \|D^n u_\lambda(t,x)\|^q_{H^{\otimes n}} \leq \liminf_{\beta \to 0}
\E\|D^n u_{\lambda\beta}(t,x)\|^q_{H^{\otimes n}},
\]
which implies, together with the last inequality,
\[
\sup_{(t,x)\in\co_T} \E \|D^n u_\lambda(t,x)\|^q_{H^{\otimes n}} < N.
\]
Recalling that, by Proposition \ref{prop:1}, $u_\lambda(t,x) \to
u(t,x)$ in $\LL^q$ as $\lambda \to 0$ for all $(t,x) \in \co_T$,
appealing again to Lemma \ref{lm:clos}, we arrive at $u(t,x) \in
\D^{m,q}$ for all $(t,x)$. Since $q$ is arbitrary, we conclude that
$u(t,x) \in \D^{m,\infty}$ for all $(t,x)$.
\end{proof}

\subsection{Analysis of the Malliavin matrix}
In this subsection, we shall use a standard method in order to prove
that the inverse of the Malliavin matrix has moments of all orders
(see e.g. \cite[Theorem 6.2]{Nualart-QuerPOTA}).
\begin{prop}     \label{prop:mm}
  Assume that the hypotheses of Theorem \textnormal{\ref{thm:main}(a)} are
  satisfied, as well as condition (\ref{lower-s}).  Then, for any $(t,x)\in ]0,T]\times \co$, one has
  \[
  \E \|Du(t,x)\|_H^{-q} <\infty \qquad \forall q \geq 1.
  \]
\end{prop}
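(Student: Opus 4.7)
The plan is to establish the polynomial tail bound
$\P\big(\|Du(t,x)\|_H^2 < \epsilon\big) \lesssim_{p,x} \epsilon^p$
for every $p\geq 1$ and every sufficiently small $\epsilon>0$; this immediately yields $\E\|Du(t,x)\|_H^{-q}<\infty$ for all $q\geq 1$ through the identity $\E Y^{-q/2} = \int_0^\infty \P(Y<r^{-2/q})\,dr$ applied to $Y=\|Du(t,x)\|_H^2$. Evaluating (\ref{eq:120}) pointwise in the Malliavin parameter $r\in[0,t]$ gives the splitting $[Du(t,x)](r,\cdot) = G_{t-r}(x,\cdot) + R(t,x,r,\cdot)$ in $L^2_Q$, where
\[
R(t,x,r,\cdot) := \int_r^t\!\!\int_\co G_{t-s}(x,z)\,(\eta-f'(u(s,z)))\,[Du(s,z)](r,\cdot)\,dz\,ds.
\]
Integrating the elementary inequality $\|a+b\|^2\geq \tfrac12\|a\|^2-\|b\|^2$ in $L^2_Q$ over the short window $r\in[t-\delta,t]$, and invoking (\ref{lower-s}), yields, for any $\delta\in(0,t)$,
\[
\|Du(t,x)\|_H^2 \;\geq\; \tfrac{c_x}{2}\delta^\gamma - \int_{t-\delta}^t \|R(t,x,r,\cdot)\|_{L^2_Q}^2\,dr.
\]

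The heart of the matter is to show that the correction integral has $L^p(\Omega)$-norm of order $\delta^{2p}$, a strictly faster decay than the main term's $\delta^{\gamma p}$ since $\gamma<2$. Minkowski's inequality in $L^2_Q$ bounds $\|R(t,x,r,\cdot)\|_{L^2_Q}$ by
$\int_r^t\!\int_\co G_{t-s}(x,z)\,|\eta-f'(u(s,z))|\,\|[Du(s,z)](r,\cdot)\|_{L^2_Q}\,dz\,ds$.
Applying the Cauchy--Schwarz inequality with respect to the measure $G_{t-s}(x,z)\,dz\,ds$ --- whose total mass on $[r,t]\times\co$ is at most $t-r\leq\delta$ thanks to (\ref{eq:101}) --- and subsequently swapping the $r$- and $s$-integrations yields
\[
\int_{t-\delta}^t \|R(t,x,r,\cdot)\|_{L^2_Q}^2\,dr \;\leq\; \delta \int_{t-\delta}^t\!\!\int_\co G_{t-s}(x,z)\,(\eta-f'(u(s,z)))^2\,\|Du(s,z)\|_H^2\,dz\,ds.
\]
Taking the $p$-th moment, a further Jensen inequality with the same normalized measure combined with the uniform bounds (\ref{unif-sup}) and (\ref{sup:deriv}), the polynomial growth of $f'$, and one concluding Cauchy--Schwarz in $\omega$ to separate $(\eta-f'(u))^{2p}$ from $\|Du\|_H^{2p}$ in $L^2(\Omega)$ then produces
\[
\E\Big[\Big(\int_{t-\delta}^t \|R(t,x,r,\cdot)\|_{L^2_Q}^2\,dr\Big)^p\Big] \;\lesssim_{p}\; \delta^{2p}.
\]

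Plugging this into Markov's inequality yields
$\P\big(\int_{t-\delta}^t \|R\|_{L^2_Q}^2\,dr \geq \tfrac{c_x}{4}\delta^\gamma\big) \lesssim_{p,x} \delta^{(2-\gamma)p}$,
and the specialization $\delta:=(4\epsilon/c_x)^{1/\gamma}$ forces the event $\{\|Du(t,x)\|_H^2<\epsilon\}$ to lie inside this exceptional event (for $\epsilon$ small enough that $\delta<t$). Since $\gamma<2$ and $p$ is arbitrary, one obtains $\P(\|Du(t,x)\|_H^2<\epsilon) \lesssim_{p,x} \epsilon^{(2-\gamma)p/\gamma}$ with exponent arbitrarily large, which is the required super-polynomial decay. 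The main technical obstacle I foresee is the clean execution of the Minkowski / Cauchy--Schwarz / Jensen chain, since it is precisely the extraction of the extra factor $\delta$ in the outer Cauchy--Schwarz step --- which crucially exploits the $L^\infty$-type estimate (\ref{eq:101}) rather than any $L^2$-type bound on $G$ --- that enables the trade-off with $\delta^\gamma$ for which the assumption $\gamma<2$ is tailored.
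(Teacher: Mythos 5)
Your proposal is correct and follows essentially the same route as the paper's proof: the same decomposition of $Du(t,x)$ via \eqref{eq:120} over a short window $[t-\delta,t]$, the same $\delta^{2p}$ moment bound on the remainder via Minkowski/Cauchy--Schwarz/Jensen against the measure $G_{t-s}(x,y)\,dy\,ds$ using \eqref{eq:101}, \eqref{unif-sup} and \eqref{sup:deriv}, and the same choice $\delta\sim\varepsilon^{1/\gamma}$ exploiting $\gamma<2$. The only cosmetic differences are that you prove the small-ball-to-negative-moments reduction by hand rather than citing Lemma 2.3.1 of Nualart's book, and you order the integral inequalities slightly differently.
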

\begin{proof}
  By \cite[Lemma 2.3.1]{nualart}, it suffices to prove that for any
  $q\geq 2$, there exists $\varepsilon_0(q)>0$ such that, for all
  $\varepsilon\leq \varepsilon_0$,
  \begin{equation}
    \label{eq:52}
    \P\left( \|Du(t,x)\|^2_H <\varepsilon\right) \lesssim \varepsilon^q. 
  \end{equation}

  Let $(t,x) \in ]0,T] \times \mathcal{O}$ be fixed.  Observe that we
  are assuming the same hypotheses as in Proposition
  \ref{prop:one}. Hence, $u(t,x)$ belongs to $\mathbb{D}^{1,
    \infty}$ and the Malliavin derivative $D u(t,x)$ satisfies
  equation (\ref{eq:120}).  Then, using this latter equation, we can
  infer that, for any $\delta>0$ sufficiently small,
  \begin{equation*}
    \|Du(t,x)\|^2_H = \int_0^t \|D_{\tau} u(t,x)\|_{L^2_Q}^2 d\tau 
    \geq \int_{t-\delta}^t \|D_{\tau} u(t,x)\|_{L^2_Q}^2 d\tau
    \geq \frac{1}{2} g(x,\delta)-I(t,x, \delta),
  \end{equation*}
  where $g(x,\delta)$ is as in assumption (\ref{lower-s}) and
  \begin{equation*} 
      I(t,x, \delta) :=\int_0^{\delta} \bigg\Vert \int_{t-\tau}^t
      \int_{\mathcal{O}} G_{t-s}(x,y) (\eta-f'(u(s,y))) D_{t-\tau}
      u(s,y) \, dyds \bigg\Vert^2_{L^2_Q} d\tau.
  \end{equation*}
  Hence, using Chebyshev's inequality, we have, for all $\varepsilon>0$,
  \begin{equation}
    \label{eq:51}
      \P\left( \|Du(t,x)\|^2_H <\varepsilon\right) 
      \leq \P\left\{ I(t,x;\delta) \geq \frac{g(x,\delta)}{2} -\varepsilon\right\} \leq \left( \frac{g(x,\delta)}{2} -\varepsilon\right)^{-p} \E
      |I(t,x,\delta)|^p.
  \end{equation}
  
  Let us now find an upper bound for the $p$-th moment of
  $I(t,x,\delta)$.  For this, we start by applying Minkowski and
  H\"older's inequalities, the latter with respect to the measure on
  $[t-\delta,t]\times \co$ given by $G_{t-s}(x,y)dy ds$, to obtain that
  \begin{align}
    & \E |I(t,x,\delta)|^p \leq \E \left( \int_{t-\delta}^t \int_\co
        G_{t-s}(x,y) |\eta-f'(u(s,y))| \|D_{t-\cdot} u(s,y)\|_{L^2([0,\delta];
          L^2_Q)}
        \, dyds\right)^{2p}  \nonumber \\
      & \qquad \leq \left(\int_{t-\delta}^t\int_\co G_{t-s}(x,y)\, dy ds\right)^{2p-1}  \nonumber \\
      & \qquad \qquad \times
      \int_{t-\delta}^t \int_\co G_{t-s}(x,y)\E ( (\eta+|f'(u(s,y))|)^{2p} \|D_{t-\cdot} u(s,y)\|^{2p}_{L^2([0,\delta]; L^2_Q)} )  \, dyds  \nonumber \\
      & \qquad \lesssim \delta^{2p-1} \int_{t-\delta}^{t} \int_\co
      G_{t-s}(x,y) \E ( (\eta+|f'(u(s,y))|)^{2p} \|D_{t-\cdot}
      u(s,y)\|^{2p}_{L^2([0,\delta]; L^2_Q)} ) \, dyds,
  \label{eq:50}  
  \end{align}
  where we have also used the estimate (\ref{eq:101}).  Thus, applying
  the Cauchy-Schwarz inequality and appealing to the polynomial growth
  condition on $f'$ (say $|f'(z)|\lesssim 1+|z|^r$ for all $z\in
  \erre$), the right hand side of (\ref{eq:50}) can be estimated, up
  to a positive constant, by
  \[
  \begin{split}
    &\delta^{2p-1} \sup_{(s,y)\in [t-\delta,t]\times \co} (\E \|D_{t-\cdot} u(s,y)\|^{4p}_{L^2([0,\delta]; L^2_Q)})^{1/2}  \\
    &\qquad \qquad \times \int_{t-\delta}^{t} \int_\co G_{t-s}(x,y)
    \left(1+ (\E |u(s,y)|^{4r p})^{1/2} \right) dyds.
  \end{split}
  \]
  At this point, we can appeal to (\ref{sup:deriv}) to get
  \[
  \sup_{(s,y)\in [t-\delta,t]\times \co} (\E \|D_{t-\cdot}
  u(s,y)\|^{4p}_{L^2([0,\delta]; L^2_Q)})^{1/2} \leq C(T).
  \]
  Taking into account again estimate (\ref{eq:101}), and the uniform bound (\ref{unif-sup}), we can infer that
  \[
  \E |I(t,x,\delta)|^p \lesssim_T\, \delta^{2p}.
  \]
  Plugging this estimate into (\ref{eq:51}), we obtain that
  \[
  \P\left( \|Du(t,x)\|^2_H <\varepsilon\right) \lesssim_T\,
  \left(\frac{g(x,\delta)}{2} -\varepsilon\right)^{-p} \delta^{2p}.
  \]
On the other hand, (\ref{lower-s}) yields $g(x, \delta) \geq c_x \delta^{\gamma}$.
Thus, if we choose $\delta=\delta(\varepsilon,x)$ sufficiently small in such a way that
  $c_x \delta^{\gamma}=4 \varepsilon$, we get 
  \[
  \P\left( \|Du(t,x)\|^2_H <\varepsilon\right) \lesssim_T
  \frac{\delta^{2p}}{\varepsilon^p} \lesssim_{x,T} \varepsilon^{p (\frac{2}{\gamma}-1)},
  \]
where $\frac{2}{\gamma}-1>0$ by hypothesis.
  Therefore, going back to (\ref{eq:52}), it suffices to take $p=\frac{q \gamma}{2-\gamma}$ and the proof is completed.
\end{proof}

\begin{rmk}\label{rmk:existence}
 We should point out that, in fact, in order to prove the existence of density (i.e. Theorem \ref{thm:main}(a)),
 condition (\ref{lower-s}) may be slightly weakened. Precisely, one needs to prove that $\|Du(t,x)\|_H>0$ $\P$-a.s.
 First, by (\ref{eq:345}) and 
 the lower semicontinuity of the norm with respect to weak convergence, one gets that 
 for any $\delta\in (0,1)$ and $q\geq 1$,
 \[
  \E \| D u(t,x) \|^q_{L^2([t-\delta,t]; L^2_Q)} \leq g(x,\delta)^{\frac q2}.
 \]
 Then, similarly as above (see also \cite[Thm. 5.2]{Nualart-QuerPOTA}), one proves that
 \begin{equation}\label{00}
  \E | I(t,x,\delta)| \lesssim  \delta \int_0^\delta \int_{\co} G_s(x,y) g(y,\delta) \; dy ds,
 \end{equation}
 and that, for any $n\geq 1$ and $\delta\in (0,1)$,
 \[
  \P(\|Du(t,x)\|_H^2< \frac1n ) \leq \left(\frac{g(x,\delta)}{2}-\frac1n\right)^{-1} \E |I(t,x,\delta)|.
 \]
 Taking limit as $n\rightarrow \infty$ and using (\ref{00}), we end up with
 \[
  \P(\|Du(t,x)\|_H^2 =0 ) \lesssim  g(x,\delta)^{-1} \delta \int_0^\delta \int_{\co} G_s(x,y) g(y,\delta) \; dy ds.
 \]
 In conclusion, $u(t,x)$ has a density provided that the following two conditions are satisfied:
 \begin{itemize}
  \item[(a)] For any $x\in \co$, $g(x,\delta)>0$ for all $\delta>0$,
  \item[(b)] For any $x\in \co$, it holds
  \[
   \lim_{\delta\rightarrow 0} \; \frac{\delta}{g(x,\delta)} \int_0^\delta \int_{\co} G_s(x,y) g(y,\delta) \; dy ds = 0.  
  \]
 \end{itemize}
\end{rmk}

\subsection{Proof of Theorem \ref{thm:main}}
It is now just a matter of putting pieces together. In particular, in
view of Bouleau-Hirsch criterion (see
e.g. \cite[Thm.~2.1.3]{nualart}), (a) follows by Propositions
\ref{prop:one} and \ref{prop:mm}. Similarly, (b) follows by
Propositions \ref{prop:smoothness} and \ref{prop:mm}, applying
a general criterion of the Malliavin calculus (see
e.g. \cite[Prop.~2.1.5]{nualart} or \cite[Thm.~4.1]{Malliavin}).


\section*{Acknowledgments}
Part of the work for this paper was carried out while the
authors were visiting the Hausdorff Institute for Mathematics in
Bonn, whose hospitality and financial support are gratefully
acknowledged. L. Quer-Sardanyons is also supported by the grant MICINN-FEDER Ref. MTM2009-08869.


\providecommand{\bysame}{\leavevmode\hbox to3em{\hrulefill}\thinspace}
\providecommand{\MR}{\relax\ifhmode\unskip\space\fi MR }
\providecommand{\MRhref}[2]{%
  \href{http://www.ams.org/mathscinet-getitem?mr=#1}{#2}
}
\providecommand{\href}[2]{#2}

\end{document}